\theoremstyle{definition}
\newtheorem{theorem}{Theorem}[section]
\newtheorem{lemma}[theorem]{Lemma}
\newtheorem{observation}[theorem]{Observation}
\newtheorem{remark}[theorem]{Remark}
\newtheorem{corollary}[theorem]{Corollary}
\begin{document}

\title{{\bf A Connection between Metric Dimension and Distinguishing Number of Graphs}}

\author{Meysam Korivand and Nasrin Soltankhah \thanks{Corresponding author}}

\date{}

\maketitle

\begin{center} 

Department of Mathematics, Faculty of Mathematical Sciences, \\
Alzahra University, Tehran, Iran \\
e-mail: {\tt mekorivand@gmail.com or m.korivand@alzahra.ac.ir,\\ soltan@alzahra.ac.ir}

\end{center}

\begin{abstract} 
In this paper, we introduce a connection between two classical concepts of graph theory: \; metric dimension and distinguishing number. 
For a given graph $G$, let ${\rm dim}(G)$ and $D(G)$ represent its metric dimension and distinguishing number, respectively.
We show that in connected graphs, any resolving set breaks the symmetry in the graphs. Precisely, if 
$G$ 
is a connected graph with a resolving set 
$S=\{v_1, v_2, \ldots, v_n \}$, 
then  
$\{\{v_1\}, \{v_2\}, \ldots, \{v_n\}, V(G)\setminus S \}$ 
is a partition of 
$V(G)$ 
into a distinguishing coloring, and as a consequence   
$D(G)\leq {\rm dim}(G)+1$. 
Furthermore, we construct graphs $G$ such that $D(G)=n$ and ${\rm dim}(G)=m$ for all values of $n$ and $m$, where $1\leq n< m$.
Using this connection, we have characterized all graphs 
$G$ 
of order 
$n$ 
with  
$D(G) \in \{n-1, n-2\}$.
For any graph $G$, let 
$G_c = G$ 
if 
$G$ 
is connected,
and 
$G_c = \overline{G}$ 
if 
$G$ 
is disconnected. 
Let 
$G^{\ast}$ 
denote the twin graph obtained from 
$G$ 
by contracting any maximal set of vertices with the same open or close neighborhood into a vertex. 
Let 
{\rsfs F} 
be the set of all graphs except graphs $G$ with the property that 
${\rm dim}(G_c)=|V(G)|-4$, 
${\rm diam}(G_c) \in \{2, 3\}$ 
and 
$5\leq |V(G_{c}^{\ast})| \leq 9$. 
We characterize all graphs
$G \in$ {\rsfs F} 
of order 
$n$
with the property that 
$D(G)= n-3$.
\end{abstract}

\noindent {\bf Keywords}: resolving sets; distinguishing number;  twin graph; almost asymmetric.

\medskip\noindent
{\bf AMS Subj.\ Class}: 05C15. 

\section{Introduction} 
For a connected graph $G$, a subset $S \subseteq V(G)$ is a {\it resolving set} if for any two vertices 
$g_1$  and $g_2$ of $G$, there exists vertex $s \in S$ such that ${\rm d}(g_1, s)\neq {\rm d}(g_2, s)$.
The smallest size that can be taken by a resolving set $S$ is called the {\it metric dimension} and is denoted by ${\rm dim}(G)$. 
The concept of resolving sets was introduced independently and simultaneously by Slater \cite{slater} and Harary \& Melter \cite{harary} in 1975-6. The concept of metric dimension is one of the most widely used and fundamental concepts in graph theory, and more than 1000 articles have been written on this concept. Today, after five decades, the metric dimension problems are still the topic of many researchers in graph theory. This concept in computer science is one of the classic examples of an NP-hard problem for education. Many uses of this concept have been found in other sciences such as computer science and chemistry. Various applications of this concept have been found so far to solve real-world problems such as
privacy in social networks, error correcting codes, locating intruders in networks, chemistry, robot navigation, pattern recognition, image processing and coin weighing \cite{2a, 1a, harary, 0, Khuller, 13, slater, 1.1}. For more information on the applications of the metric dimension, see the recently published survey \cite{survey2}.
Many versions of the metric dimension have been introduced and studied, including:  Strong Metric Dimension, $k$-Metric Dimension, Local Metric Dimension, Adjacency Dimension, Non-local Metric Dimension, Edge Metric Dimension, Resolving Partitions, Fractional Metric Dimension and Mixed Metric Dimension   
\cite{b.1, c.4, b.2, c.5, Klavžar0, c.7, c.6, c.9, c.8}. 
For complete information about the versions of the standard metric dimension introduced so far, see also the survey \cite{survey1}.

In 1977, Babai \cite{babai} presented a concept, which has been the infrastructure of many deep researches and definitions in graph theory and group theory. In this concept, the goal is to break symmetry in graphs by a vertex partition of graphs with smallest size. Since each symmetry in a graph represents a non-trivial automorphism of the graph and the set of automorphisms of a graph forms a group, the deep study of this concept has been related to the knowledge of group theory in many cases. Two decades later, when Albertson and Collins \cite{alber} studied this concept, it received  widespread attention. After that, this concept was added to the graph theory literature with the names of {\it distinguishing labeling} or {\it asymmetric coloring}. A distinguishing coloring of a graph is a vertex coloring such that there is no color preserving non-trivial automorphism of the graph. 
The minimum color required for a distinguishing coloring of a graph $G$ is indicated by $D(G)$ and is called {\it distinguishing number} of $G$.
In the beginning, group theory completely covered this concept, that means, the proof techniques depended on the properties of the automorphism group of graphs, and also the distinguishing number was attributed to the automorphism group of graphs. 
Specialists in group theory developed this concept in such a way that this concept is studied when a group acts on an arbitrary set, instead of focusing only on the distinguishing number when the automorphism group of a graph acts on its vertices. Many works have been done in this way, for example, we can refer to the papers \cite{Babai, Bailey, Chan, Klavžar, Tymoczko}. 
In another research line, experts in graph theory approached this problem. Here, instead of the properties of the automorphism group of a graph, the technique of proofs was changed to the properties of the automorphism of the graph, such as distance preservation. Also, the distinguishing number was assigned to the graph instead of the automorphism group of it. This change of approach also had some advantages, because knowing the automorphism group of a graph is itself a difficult problem, and with this method we can compute the distinguishing number of graphs whose automorphism group is unknown. For some interesting papers using this approach, see \cite{Ahmadi, Collins, Collins2, Tucker,  Kalinowski, Imrich, Klavžar, Russell, Shekarriz}. 
This concept has been the inspiration for other concepts, such as 
 Distinguishing Maps, Proper Distinguishing Coloring, Distinguishing Arc-coloring, Distinguishing Index, and Distinguishing Threshold \cite{Tucker, Collins, Kalinowski2, Kalinowski, Shekarriz}. 

These two important concepts have gone their research paths for years without paying attention to each other. Here we show that there is a connection between them. In fact, we will show that solving the metric dimension problem breaks the symmetry in graphs. This connection makes the resolving set even more important. Since many articles have been written on these two concepts, this connection can yield significant direct  results for both of them. For example, the upper bounds for the metric dimension are also upper bounds for the distinguishing number. Also, the lower bounds for the distinguishing number minus one are lower bounds for the metric dimension. This link is a shortcut for studying them. Although the proof of this connection is not complicated, it can avoid unnecessary complications in their study in some problems. In this paper, we have used the results in the metric dimension to characterize all graphs 
$G$ 
of order 
$n$ 
with the property that 
$D(G) \in \{n-1, n-2\}$. 
We have shown that there are graphs for which these two parameters are freely variable and not dependent. 
Hence we construct graphs $G$ such that $D(G)=n$ and ${\rm dim}(G)=m$ for all values of $n$ and $m$, where $1\leq n< m$.
For any graph $G$, let 
$G_c = G$ 
if 
$G$ 
is connected,
and 
$G_c = \overline{G}$  
if 
$G$ 
is disconnected. 
Let 
$G^{\ast}$ 
denote the twin graph obtained from 
$G$ 
by contracting any maximal set of vertices with the same open or close neighborhood into a vertex. 
The family of graphs of order $n$ and distinguishing number $n-3$ is relatively large and we have obtained 42 of them. 
Let 
{\rsfs F} 
be the set of all graphs except graphs $G$ with the property that 
${\rm dim}(G_c)=|V(G)|-4$, 
${\rm diam}(G_c) \in \{2, 3\}$ 
and 
$5\leq |V(G_{c}^{\ast})| \leq 9$. 
We characterize all graphs
$G \in$ {\rsfs F} 
of order 
$n$
with 
$D(G)= n-3$. 

Since we are studying the structure of the graph, we consider isomorphic graphs to be the same and use `$=$' instead of `$\cong$'.
All definitions and symbols used and undefined are standard and can be found in \cite{book}.
\label{sec:intro}

\section{General Results} 
In this section, we show that in connected graphs, the distinguishing number is bounded by the metric dimension plus one. This bound is sharp and gives a method for distinguishing vertices of graphs by resolving sets. In addition, we show that according to this bound, the metric dimension and the distinguishing number can take arbitrary values. 

\begin{theorem}\label{main}
Let 
$G$ 
be a connected graph. Then 
$D(G)\leq {\rm dim}(G)+1$.
\end{theorem}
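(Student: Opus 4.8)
The plan is to exhibit an explicit distinguishing coloring built directly from a minimum resolving set. Let $k = {\rm dim}(G)$ and fix a resolving set $S = \{v_1, v_2, \ldots, v_k\}$. Define a vertex coloring $c : V(G) \to \{1, 2, \ldots, k+1\}$ by $c(v_i) = i$ for $1 \le i \le k$ and $c(u) = k+1$ for every $u \in V(G) \setminus S$. This uses at most $k+1$ colors (exactly $k$ colors when $S = V(G)$, in which case the bound holds a fortiori), so it suffices to prove that $c$ is a distinguishing coloring, i.e.\ that no non-trivial automorphism of $G$ preserves $c$.

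Next I would take an arbitrary color-preserving automorphism $\phi$ of $G$ and show that $\phi$ is the identity. Since each vertex $v_i \in S$ is the unique vertex carrying color $i$, the map $\phi$ must fix every $v_i$. Now invoke the elementary fact that an automorphism of a connected graph is an isometry, i.e.\ $d(\phi(x), \phi(y)) = d(x, y)$ for all $x, y \in V(G)$ (it preserves adjacency, hence shortest paths, hence distances). Then for any vertex $u$ and each index $i$ we get $d(\phi(u), v_i) = d(\phi(u), \phi(v_i)) = d(u, v_i)$, so $u$ and $\phi(u)$ have the same vector of distances to the vertices of $S$. Because $S$ is a resolving set, distinct vertices have distinct such vectors, which forces $\phi(u) = u$. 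As $u$ was arbitrary, $\phi = \mathrm{id}$, and hence $D(G) \le k + 1 = {\rm dim}(G) + 1$.

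The argument is genuinely short, and the only thing resembling an obstacle is recognizing which standard fact to use — that graph automorphisms preserve distances — together with tidying up the degenerate case $S = V(G)$, where the partition $\{\{v_1\}, \{v_2\}, \ldots, \{v_k\}, V(G)\setminus S\}$ has $k$ nonempty blocks rather than $k+1$. As a byproduct, the same reasoning proves the sharper structural claim stated in the abstract: for any resolving set $S$, the partition $\{\{v_1\}, \{v_2\}, \ldots, \{v_k\}, V(G)\setminus S\}$ is itself a distinguishing partition of $V(G)$, so every resolving set already breaks all the symmetry of $G$.
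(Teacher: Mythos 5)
Your proof is correct and follows essentially the same route as the paper: color the resolving set with distinct colors, fix those vertices under any color-preserving automorphism, and use the fact that automorphisms are isometries together with the resolving property to conclude the automorphism is trivial (the paper phrases this as a contradiction with a non-trivial automorphism, you argue directly that it is the identity, which is the same argument).
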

\begin{proof}
Let 
$S \subseteq V(G)$ 
be a resolving set of 
$G$. 
Color members of 
$S$ 
with distinct colors 
$1$ 
to 
$|S|$, 
and assign color 
$|S|+1$ 
to the rest of vertices of 
$G$. 
We claim that this coloring is a distinguishing coloring for 
$G$. 
For a contradiction, assume that there exists a non-trivial automorphism 
$f$ 
that preserves this coloring. Thus, all vertices of 
$S$ 
are fixed by 
$f$. 
Since 
$f$ 
is non-trivial, there exist vertices 
$v, u \in V(G)\setminus S$ 
such that 
$f(v)=u$. 
Let 
$w \in S$. 
So, 
$f(w)=w$. 
Since 
$f$
is an isometry, i.e., it preserves distances, we have
$$ 
{\rm d}(v, w)={\rm d}(f(v), f(w))={\rm d}(u, w).
$$ 
This implies that there is no vertex in 
$S$ 
that resolves 
$v$ 
and 
$u$, 
which leads to a contradiction.
\end{proof}
Note that this bound is sharp. For instance, let 
$G=P_n$ 
for an integer 
$n\geq 2$. 
Hence, 
$D(G)={\rm dim}(G)+1=2$.
\begin{corollary}
Let 
$G$ 
be a connected graph. For any resolving set 
$S=\{v_1, v_2, \ldots, v_n \}$ 
of 
$G$, 
$\{\{v_1\}, \{v_2\}, \ldots, \{v_n\}, V(G)\setminus S \}$ 
is a partition of 
$V(G)$ 
into a distinguishing coloring. 
\newpage
\end{corollary}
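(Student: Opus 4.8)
The plan is to observe that this statement is a direct reformulation of the construction already carried out inside the proof of Theorem~\ref{main}, so only two small things genuinely need to be checked: that the given family of sets is a partition of $V(G)$, and that the coloring it induces is exactly the one shown there to be distinguishing.

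First I would verify the partition claim. Since the vertices $v_1, \dots, v_n$ are distinct, the singletons $\{v_1\}, \dots, \{v_n\}$ are pairwise disjoint, and by definition every element of $V(G)\setminus S$ lies in none of them; the union of all these blocks is $S \cup (V(G)\setminus S) = V(G)$. Thus $\{\{v_1\},\dots,\{v_n\}, V(G)\setminus S\}$ is indeed a partition of $V(G)$, with the usual convention that the empty block is omitted in the degenerate case $S = V(G)$.

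Next I would read this partition as the vertex coloring $c$ with $c(v_i)=i$ for $1\le i\le n$ and $c(x)=n+1$ for every $x \in V(G)\setminus S$. This is precisely the coloring appearing in the proof of Theorem~\ref{main}, which establishes that no non-trivial automorphism of $G$ preserves it: a color-preserving automorphism $f$ must fix each $v_i$, since each is the unique vertex of its color; and were $f$ non-trivial, it would send some $v\in V(G)\setminus S$ to a distinct vertex $u\in V(G)\setminus S$, whence, $f$ being an isometry, ${\rm d}(v,v_i)={\rm d}(f(v),f(v_i))={\rm d}(u,v_i)$ for every $i$, so no vertex of $S$ resolves $u$ and $v$ — contradicting that $S$ is a resolving set. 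Hence $c$ is a distinguishing coloring, which is exactly the assertion.

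I do not expect a real obstacle here: the mathematical content is entirely contained in Theorem~\ref{main}. The only points that merit a word are the routine partition check and the degenerate case $S=V(G)$ (equivalently $n=|V(G)|$), where the last block is empty and the coloring simply uses $|V(G)|$ distinct colors, which is vacuously distinguishing.
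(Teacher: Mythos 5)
Your proposal is correct and follows exactly the paper's route: the corollary is stated there as an immediate consequence of the proof of Theorem~\ref{main}, whose coloring (distinct colors on $S$, one extra color on $V(G)\setminus S$) is precisely the partition in question. Your added remarks on the routine partition check and the degenerate case $S=V(G)$ are fine but add nothing beyond what the paper's argument already covers.
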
 
As natural bounds, we know that for any connected graph 
$G$ 
of order 
$n$,
$1\leq {\rm dim}(G) \leq n-1$.
This upper bound was improved to $n-{\rm diam}(G)$ by Yushmanov \cite{yush}. So the following result is immediate. 

\begin{corollary}
For any connected graph 
$G$ 
of order 
$n$, 
$D(G)\leq n-{\rm diam}(G)+1$.
\end{corollary}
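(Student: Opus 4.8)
The plan is to obtain the bound by chaining two facts that are already available. Theorem~\ref{main} gives $D(G)\le {\rm dim}(G)+1$ for every connected graph $G$, and the sentence just before the statement records Yushmanov's sharpening \cite{yush} of the trivial upper bound, namely ${\rm dim}(G)\le n-{\rm diam}(G)$ for a connected graph of order $n$. Substituting the second inequality into the first yields
$$
D(G)\ \le\ {\rm dim}(G)+1\ \le\ \bigl(n-{\rm diam}(G)\bigr)+1,
$$
which is exactly the claim. No combinatorial work is needed; the statement is ``immediate'' precisely because all the content is carried by Theorem~\ref{main} and by \cite{yush}, so I do not expect any obstacle on this route.

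If instead a self-contained proof is preferred, I would inline the short proof of Yushmanov's bound and then reuse the coloring trick from Theorem~\ref{main}. First I would fix a diametral geodesic $v_0,v_1,\dots,v_d$ with $d={\rm diam}(G)$ and set $S=V(G)\setminus\{v_1,\dots,v_d\}$, so that $v_0\in S$ and $|S|=n-d$. Next I would check that $S$ is a resolving set: any pair of vertices at least one of which lies in $S$ is resolved by that vertex (it is at distance $0$ from itself and positive distance from the other), while a pair $v_i,v_j$ with $1\le i<j\le d$ is resolved by $v_0$, because ${\rm d}(v_i,v_0)=i\neq j={\rm d}(v_j,v_0)$. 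Finally I would color the $n-d$ vertices of $S$ with distinct colors and all remaining vertices with one further color; by the argument of Theorem~\ref{main} this $(n-d+1)$-coloring is distinguishing, giving $D(G)\le n-{\rm diam}(G)+1$.

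The only delicate point in this second route is the bookkeeping: one must delete exactly the $d$ vertices $v_1,\dots,v_d$, keeping the endpoint $v_0$, so that $S$ simultaneously has size $n-d$ and still contains a vertex whose distances linearly order the geodesic — this is what prevents an off-by-one error. I would close by recording sharpness: for $G=P_n$ one has ${\rm diam}(G)=n-1$ and $D(G)=2=n-(n-1)+1$, so the inequality is attained.
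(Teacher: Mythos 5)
Your first route is exactly the paper's argument: the corollary is stated as immediate from Theorem~\ref{main} together with Yushmanov's bound ${\rm dim}(G)\le n-{\rm diam}(G)$, i.e.\ $D(G)\le {\rm dim}(G)+1\le n-{\rm diam}(G)+1$. Your self-contained backup (deleting the interior of a diametral geodesic to get a resolving set of size $n-{\rm diam}(G)$, then applying the coloring of Theorem~\ref{main}) is also correct, but it is not needed beyond what the paper already does.
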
 
For two vertices 
$v$ 
and 
$u$ 
of a graph, 
$v-u$
path is a path between 
$v$ 
and 
$u$.
\begin{theorem}\label{w}
For any $1\leq n< m$, there exists a graph $G$ with $D(G)=n$ and ${\rm dim}(G)=m$.
\end{theorem}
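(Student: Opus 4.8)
The plan is to build $G$ from two ingredients: a component (or gadget) that forces a large metric dimension $m$ while contributing little symmetry, together with a highly symmetric piece that forces the distinguishing number up to exactly $n$. A natural first candidate is to start from a graph with known, controllable metric dimension and distinguishing number and glue on symmetric pendant structures. Concretely, I would attach to a single vertex $v$ of a suitable base graph $H$ a collection of $n$ mutually isomorphic "bundles," where each bundle is designed so that (i) permuting the bundles among themselves gives automorphisms of $G$, forcing $D(G)\ge n$, and (ii) inside each bundle one needs a prescribed number of landmark vertices to resolve its leaves, pushing $\dim(G)$ up to $m$. The classical fact that a vertex with $t$ leaves (or $t$ pendant paths of equal length) hanging off it forces at least $t-1$ of those leaves into every resolving set is the workhorse for the lower bound on $\dim$, and the same leaves being freely interchangeable is the workhorse for the lower bound on $D$.

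The key steps, in order, would be: first, fix the construction — say $G$ consists of a central vertex $c$ with $n$ identical pendant "brooms," each broom being a path of length $2$ ending in $k$ leaves, with $k$ chosen as a function of $n$ and $m$ so that the leaf-counting gives exactly $\dim(G)=m$ (one needs $n(k-1)=m$ or a small adjustment thereof, so one may instead let the brooms have slightly different internal lengths — all still pairwise isomorphic — to hit an arbitrary $m>n$). Second, prove $\dim(G)\le m$ by exhibiting an explicit resolving set of size $m$: take all-but-one leaf from each broom (these resolve all leaves within a broom), and check that vertices in different brooms and the internal path vertices are resolved by distance to $c$ combined with which broom a chosen landmark lies in. Third, prove $\dim(G)\ge m$: any resolving set must contain all-but-one leaf of each broom's leaf-set (two such leaves are twins otherwise), giving the matching lower bound. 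Fourth, prove $D(G)=n$: the automorphism group is exactly the group permuting the $n$ brooms (each broom being rigid once its leaves are distinguished, or with a controlled internal symmetry that can be killed cheaply), so coloring is a distinguishing-coloring problem on $n$ interchangeable objects, whence $D(G)=n$ — give $n$ brooms distinct "signatures" using $n$ colors and show $n-1$ colors cannot work because two brooms would receive identical colorings and could be swapped. Finally, handle the degenerate boundary cases ($n=1$, or $m$ only slightly larger than $n$) separately, possibly with a small ad hoc graph.

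The main obstacle I anticipate is the bookkeeping in the middle: making the brooms simultaneously (a) pairwise isomorphic — essential for the $D(G)\ge n$ argument — and (b) collectively tunable to an \emph{arbitrary} $m>n$ for $\dim$, while (c) not accidentally introducing extra automorphisms (e.g. internal mirror symmetries of a broom, or symmetries that mix internal path vertices with leaves) that would either lower $D$ or complicate the resolving-set count. Getting the parity/divisibility right — so that "all but one leaf per broom" lands exactly on $m$ — is the fiddly point; the cleanest fix is to allow each broom to be a path of the \emph{same} length with a \emph{different} but fixed number of leaves would break the isomorphism, so instead one keeps the leaf-count equal and varies $n$ only through repetition, then absorbs the residue $m-n(k-1)$ by lengthening a shared, non-symmetric "tail" attached at $c$ whose vertices each demand one more landmark. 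Verifying that this tail does not create automorphisms and that its internal vertices are resolved is the last piece of genuine checking; everything else is the standard twin/leaf argument for $\dim$ and the standard interchangeable-gadget argument for $D$.
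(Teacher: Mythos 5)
Your plan hinges on the claim that attaching $n$ pairwise isomorphic brooms to a central vertex forces $D(G)=n$, the purported reason being that with $n-1$ colors two brooms would have to receive identical colorings. That step is false as soon as each broom has more than one vertex: a rigid broom on $s$ vertices admits on the order of $c^{s}$ distinct colorings from $c$ colors, so all $n$ brooms can be given pairwise different signatures with roughly $n^{1/s}$ colors, not $n$; the ``$n$ interchangeable objects need $n$ colors'' argument is only valid when those objects are single vertices, i.e.\ mutual twins, as in $K_{1,n}$. The only mechanism in your construction that genuinely pushes $D(G)$ up to $n$ is the set of $k$ mutually twin leaves inside a single broom, which forces $k=n$; but then the same twin counting you use for the lower bound gives ${\rm dim}(G)\geq n(n-1)$, so the values $n<m<n(n-1)$ become unreachable, defeating the point of the theorem. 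Your proposed repair --- absorbing the residue $m-n(k-1)$ by lengthening an asymmetric tail at $c$ whose ``vertices each demand one more landmark'' --- also fails: lengthening a pendant path does not raise the metric dimension, since its vertices lie at pairwise distinct distances from any fixed landmark; the tail contributes nothing, not one landmark per vertex. So as designed the construction cannot pin $D$ at exactly $n$ and ${\rm dim}$ at an arbitrary $m>n$ simultaneously.

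The paper decouples the two parameters the other way around, and this is the idea your proposal is missing: the dimension is supplied by an asymmetric spider $T_k$ whose legs have pairwise \emph{distinct} lengths $1,2,\dots,k$, so it contributes no symmetry at all ($D=1$) while a resolving set must meet all but one leg, giving ${\rm dim}(T_{m+1})=m$; the distinguishing number is then injected separately by joining the root to a clique $K_n$, i.e.\ $n$ mutual twins, which force exactly $n$ colors and add exactly $n-1$ to the dimension, so $T_{m-n+2}$ joined to $K_n$ (and $T_{m+1}$ alone when $n=1$) realizes every pair $1\leq n<m$. If you want to salvage your gadget approach, make the pendant structures pairwise non-isomorphic (e.g.\ legs of distinct lengths) so that they create no automorphisms, and create the required symmetry only through a set of $n$ twin vertices.
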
 
\begin{proof}
For an integer 
$k\geq 3$, 
let  
$T_k$ 
be a rooted tree with root 
$r$ 
such that 
(1) 
$\deg(r)=k$, 
(2) 
$T_k$ 
has
$k$ 
pendant vertices 
$v_1, v_2, \ldots, v_k$, 
(3) 
${\rm d}(v_1, r)=1, {\rm d}(v_2, r)=2, \ldots, {\rm d}(v_k, r)=k$, 
and 
(4) 
the other non-pendant vertices have degree 
$2$. 
Let  
$n=1$.
For any 
$m\geq 2$, 
let 
$G=T_{m+1}$. 
Clearly, 
$G$ 
is an asymmetric graph and 
$D(G)=1$. 
We claim that 
${\rm dim}(G)=m$. 
Assume that 
$S$ 
is a subset of 
$N(r)$ 
with size 
$m$. 
In what follows, we show that 
$S$ 
is a resolving set of
$G$. 
For this purpose, suppose that 
$x$ 
and 
$y$ 
are two arbitrary vertices of 
$G$. 
Let 
$x$ 
and 
$y$  
be two vertices in an 
$r-v_i$
path for some 
$2\leq i \leq m+1$. 
Since  
$x$ 
and 
$y$ 
have different distances from 
$r$, 
they have different distances from vertices of 
$N(r)$, 
as well. 
Thus, in the two cases whether if there is a vertex of the $r-v_i$ path in 
$S$ 
or not, these vertices are resolved by 
$S$. 
Assume that 
$x$ 
is in an 
$r-v_i$
path and
$y$  
is in an 
$r-v_j$
path, for 
$1\leq i, j \leq m+1$. 
If 
$x$ 
and 
$y$
have different distances from 
$r$, 
same as before 
$x$ 
and 
$y$  
are resolved by some vertices of 
$S$. 
So, suppose that 
$x$ 
and 
$y$ 
have the same distance from 
$r$. 
At least one of 
$r-v_i$
or 
$r-v_j$ 
paths have one vertex, say
$w$, 
in 
$S$. 
One can check that 
$x$ 
and 
$y$  
are resolved by 
$w$. 
This means that 
$S$ 
is a resolving set for 
$G$. 
Now, we will prove that 
${\rm dim}(G)=|S|=m$.
For a contradiction, assume that there is a resolving set
$S'$ 
of 
$G$ 
such that 
$|S'|<m$.
Hence, 
$S'$ 
has nothing in common with the vertex set of at least two 
$r-v_i$
and
$r-v_j$
paths for some 
$1\leq i, j \leq m+1$. 
Let 
$P_i$ 
and 
$P_j$ 
denote the 
$r-v_i$
and
$r-v_j$
paths, respectively. Let 
$N(r)\cap (V(P_i)\cup V(P_j))=\{x, y\}$. 
Thus, the vertices
$x$ 
and 
$y$ 
have the same distance from the vertices of 
$S'$. 
This implies that 
${\rm dim}(G)=m$. 

Assume that 
$2\leq n < m$. 
Let 
$G$ 
denote the graph obtained by joining the root of 
$T_{m-n+2}$ 
to all vertices of a complete graph 
$K_n$. 
Now, one can check that 
$D(G)=n$ 
and 
${\rm dim}(G)=m$. 
\end{proof} 

\section{Graphs $G$ with $D(G)=|V(G)|-1$} 
All connected graphs with ${\rm dim}(G) \in \{|V(G)| - 1, |V(G)|-2\}$  are characterized in \cite{0}. In this section, according to these results, we obtain the graphs  with
$D(G)=|V(G)|-1$.

\begin{remark}\label{r1}
For any graph
$G$,  
$D(G)=|V(G)|$ 
if and only if 
$G \in \{K_{|V(G)|}, \overline{K}_{|V(G)|} \}$.
\end{remark} 
The next corollary is immediate from \cite[Theorem 3]{0} and Remark \ref{r1}.
\begin{corollary}
For any graph 
$G$,  
$D(G)={\rm dim}(G)+1=|V(G)|$ 
if and only if 
$G= K_{|V(G)|}$.
\end{corollary}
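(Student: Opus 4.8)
The plan is to combine the two cited facts with essentially no extra work. Recall that \cite[Theorem 3]{0} characterizes the connected graphs of order $n$ with ${\rm dim}(G)=n-1$ as exactly the complete graphs, and that Remark \ref{r1} says $D(G)=|V(G)|$ holds if and only if $G\in\{K_{|V(G)|},\overline{K}_{|V(G)|}\}$. Writing $n=|V(G)|$, I would prove the two implications separately.

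For the ``if'' direction, suppose $G=K_n$. Any two equally colored vertices of $K_n$ can be interchanged by a color-preserving automorphism, so every color class in a distinguishing coloring is a singleton; hence $D(K_n)=n$ (this is also just the relevant case of Remark \ref{r1}). Combining this with the identity ${\rm dim}(K_n)=n-1$, which is the $G=K_n$ instance of \cite[Theorem 3]{0}, gives $D(G)=n=(n-1)+1={\rm dim}(G)+1=|V(G)|$, as required.

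For the ``only if'' direction, assume $D(G)={\rm dim}(G)+1=n$. From $D(G)=n$ and Remark \ref{r1} we get $G\in\{K_n,\overline{K}_n\}$. The hypothesis ${\rm dim}(G)=n-1$ in particular presupposes that $G$ admits a (finite) resolving set, so $G$ is connected; since $\overline{K}_n$ is disconnected for $n\geq 2$ and $\overline{K}_1=K_1$, the only surviving possibility is $G=K_n=K_{|V(G)|}$.

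The argument is really just bookkeeping, so there is no genuine obstacle; the single point worth stating carefully is the role of connectivity, which is exactly what eliminates the second graph $\overline{K}_n$ produced by Remark \ref{r1} and makes the metric-dimension characterization of \cite[Theorem 3]{0} applicable, so that the two equalities in the hypothesis can only be met simultaneously by $K_n$.
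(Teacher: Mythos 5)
Your proposal is correct and follows exactly the route the paper intends: the paper states the corollary as immediate from the characterization of graphs with ${\rm dim}(G)=|V(G)|-1$ in \cite[Theorem 3]{0} together with Remark \ref{r1}, which is precisely the bookkeeping you carry out. Your explicit remark that connectivity (implicit in the metric-dimension hypothesis) rules out $\overline{K}_n$ is the right observation and matches the paper's implicit use of it.
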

\newpage
\begin{lemma}\label{discon}
For positive integers 
$n$ 
and 
$t$, 
let 
{\rsfs A}$= \{G_1, G_2, \ldots , G_n\}$ 
be the set of all connected graphs such that 
$H \in${\rsfs A}
if and only if 
$D(H)=t$. 
Then 
{\rsfs B}$= \{\overline{G_1}, \overline{G_2}, \ldots , \overline{G_n}\} \setminus${\rsfs A}  
is the set of all disconnected graphs such that 
$H \in${\rsfs B} 
if and only if 
$D(H)=t$. 
\end{lemma}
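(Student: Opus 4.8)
The plan is to reduce the statement to two standard facts: (i) $D(G)=D(\overline{G})$ for every graph $G$; and (ii) the complement of a disconnected graph is connected. Fact (i) holds because $\mathrm{Aut}(G)=\mathrm{Aut}(\overline{G})$, so a vertex coloring admits a non-trivial color-preserving automorphism of $G$ exactly when it admits a non-trivial color-preserving automorphism of $\overline{G}$; hence $G$ and $\overline{G}$ share the same distinguishing colorings and the same distinguishing number. Fact (ii) is elementary. Granting these, the lemma is just a short set-theoretic bookkeeping, so I do not expect a genuine obstacle; the only point that needs a little care is verifying that removing {\rsfs A} discards exactly those complements $\overline{G_i}$ that happen to be connected.

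First I would pin down $\{\overline{G_1},\ldots,\overline{G_n}\}\cap${\rsfs A}. If $\overline{G_i}$ is connected, then since $G_i\in${\rsfs A} we have $D(\overline{G_i})=D(G_i)=t$ by (i), and because {\rsfs A} is by definition the set of \emph{all} connected graphs with distinguishing number $t$, this forces $\overline{G_i}\in${\rsfs A}. Conversely, every member of {\rsfs A} is connected by hypothesis. Therefore {\rsfs B}$=\{\overline{G_1},\ldots,\overline{G_n}\}\setminus${\rsfs A} is precisely the set of those $\overline{G_i}$ that are disconnected.

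It then remains to check the two inclusions between {\rsfs B} and $\mathcal{D}:=\{H:\ H\text{ is disconnected and }D(H)=t\}$. For {\rsfs B}$\subseteq\mathcal{D}$: an arbitrary $H\in${\rsfs B} is some disconnected $\overline{G_i}$, and $D(H)=D(G_i)=t$ by (i), so $H\in\mathcal{D}$. For $\mathcal{D}\subseteq${\rsfs B}: let $H$ be disconnected with $D(H)=t$; then $\overline{H}$ is connected by (ii) and $D(\overline{H})=D(H)=t$ by (i), so $\overline{H}\in${\rsfs A}, say $\overline{H}=G_i$; hence $H=\overline{G_i}\in\{\overline{G_1},\ldots,\overline{G_n}\}$, and since $H$ is disconnected it is not in {\rsfs A}, so $H\in${\rsfs B}. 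The two inclusions give {\rsfs B}$=\mathcal{D}$, as claimed. Finally I would remark that, since isomorphic graphs are identified and these collections are regarded as sets, possible coincidences among the $\overline{G_i}$ cause no trouble, and the argument makes no use of $n$ being finite.
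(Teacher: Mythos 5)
Your proposal is correct and follows essentially the same route as the paper's proof: both rest on the facts that $D(G)=D(\overline{G})$ and that the complement of a disconnected graph is connected, followed by the same set-theoretic bookkeeping; you merely spell out the two inclusions and the identification of the connected complements more explicitly than the paper does.
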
 
\begin{proof} 
Let 
{\rsfs C}$=\{\overline{G_1}, \overline{G_2}, \ldots , \overline{G_n}\}$.
Since the distinguishing number of a graph and its complement are the same, it follows that for any 
$H \in${\rsfs C}, 
$D(H)=t$. 
On the other hand, the complement of a disconnected graph must be connected. This means that there is no disconnected graph 
$H$ 
with 
$D(H)=t$, 
except graphs in 
{\rsfs C}. 
However, some members of {\rsfs C} may be connected graphs. These graphs must be in {\rsfs A}. So if we remove them, we will have all disconnected graphs with $D(H)=t$. 
Therefore
{\rsfs B} = {\rsfs C} $\setminus$ {\rsfs A}
is the set of all disconnected graphs such that 
$H \in$ {\rsfs B} 
if and only if 
$D(H)=t$. 
\end{proof} 

\begin{lemma}\label{lemn-2}
Let 
$G$ 
be a connected graph of order 
$n\geq4$ 
and 
${\rm dim}(G) = n - 2$. 
For an integer 
$\ell \geq 1$, $D(G)=n-\ell$ 
if and only if 
$G$ 
is one of the following: 
\begin{itemize}
\begin{small}
\begin{multicols}{2}
\item[(a)] 
$K_{\ell+1, \ell+1}$
\item[(c)] 
$K_{\ell}+\overline{K_t}$, $t\geq \ell$
\item[(e)] 
$K_{\ell-1}+(K_t \cup K_{1})$, $t\geq \max \{2, \ell-1\}$
\item[(b)] 
$K_{t, \ell}$, $t\geq \ell+1$
\item[(d)] 
$K_t + \overline{K_{\ell}}$, $t\geq \ell\geq 2$
\item[(f)]
$K_{t}+(K_{\ell-1}\cup K_{1})$, $t\geq \max \{2, \ell-1\}$
\end{multicols}
\end{small}
\end{itemize}
\end{lemma}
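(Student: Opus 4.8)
The plan is to leverage the known classification of connected graphs of metric dimension $n-2$. Recall from \cite{0} that a connected graph $G$ of order $n\ge 4$ satisfies ${\rm dim}(G)=n-2$ if and only if $G$ is one of: $K_{s,t}$ with $s+t=n$; $K_s+\overline{K_t}$ with $s\ge 1$, $t\ge 2$, $s+t=n$; or $K_s+(K_t\cup K_1)$ with $s\ge 1$, $t\ge 2$, $s+t+1=n$. So it suffices to compute $D(G)$ on this finite list of families, then set $D(G)=n-\ell$ and read off which instances occur; these should turn out to be exactly (a)--(f).

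First I would determine ${\rm Aut}(G)$ in each case. Each of these graphs is twin-rich: its vertex set decomposes into a few classes, each consisting of mutual true twins or mutual false twins (for instance, in $K_s+(K_t\cup K_1)$ the set $K_s$ is a class of universal vertices, the copy of $K_t$ is a true-twin class, and the copy of $K_1$ is a singleton class), and a short computation of the (three or fewer) distinct vertex degrees shows these classes are pairwise non-interchangeable under any automorphism -- the single exception being that the two sides of $K_{s,s}$ may be swapped. Hence ${\rm Aut}(G)$ is a direct product of symmetric groups acting on the twin classes, except that ${\rm Aut}(K_{s,s})=(S_s\times S_s)\rtimes\mathbb{Z}_2$. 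It is exactly here that the standing hypotheses $t\ge 2$ in the last two families are used: they rule out the degenerate identifications (such as $K_s+(K_1\cup K_1)=K_s+\overline{K_2}$) that would create an extra transposition.

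Next I would reduce $D(G)$ to a one-line counting fact: if a group $S_{a_1}\times\cdots\times S_{a_k}$ acts on pairwise disjoint classes with no class-swaps, a vertex coloring is distinguishing if and only if every class receives all distinct colors, so $D(G)=\max_i a_i$; for $K_{s,s}$ one additionally needs the two color sets to differ, which forces $D(K_{s,s})=s+1$. This yields $D(K_{s,t})=\max\{s,t\}$ when $s\ne t$, $D(K_{s,s})=s+1$, $D(K_s+\overline{K_t})=\max\{s,t\}$, and $D(K_s+(K_t\cup K_1))=\max\{s,t\}$.

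Finally I would impose $D(G)=n-\ell$ and solve for the parameters. Rewriting $n$ in terms of the part sizes, the equation becomes $\ell=\min\{s,t\}$ for $K_{s,t}$ ($s\ne t$) and for $K_s+\overline{K_t}$, $\ell=\min\{s,t\}+1$ for $K_s+(K_t\cup K_1)$, and $\ell=s-1$ for $K_{s,s}$; thus $\ell$ pins down the smaller part, and substituting back while splitting on which part is larger produces precisely (a) $K_{\ell+1,\ell+1}$; (b) $K_{t,\ell}$ with $t\ge\ell+1$; (c) $K_\ell+\overline{K_t}$ with $t\ge\ell$; (d) $K_t+\overline{K_\ell}$ with $t\ge\ell\ge 2$; (e) $K_{\ell-1}+(K_t\cup K_1)$ with $t\ge\max\{2,\ell-1\}$; and (f) $K_t+(K_{\ell-1}\cup K_1)$ with $t\ge\max\{2,\ell-1\}$, the side conditions arising from the inequalities between part sizes together with $t\ge 2$. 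The main obstacle is bookkeeping rather than any new idea: one must verify that these six families exactly exhaust the solution set (no gaps, with the correct and slightly overlapping ranges), handle the small values of $\ell$ where the ``$\max\{2,\ell-1\}$'' thresholds and the overlaps between (c)/(d) and (e)/(f) first appear, and make the degree argument airtight so that no sporadic automorphism is overlooked.
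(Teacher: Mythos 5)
Your proposal is correct and follows essentially the same route as the paper: invoke the classification from \cite{0} of connected graphs with ${\rm dim}(G)=n-2$ (the three join/bipartite families), compute $D(G)$ for each family (the paper simply asserts the values $\max\{s,t\}$ or $s+1$ that you justify via the automorphism groups), and then solve $D(G)=n-\ell$ to extract cases (a)--(f). Your extra care with the automorphism groups and the degenerate identification $K_s+(K_1\cup K_1)=K_s+\overline{K_2}$ only makes explicit what the paper leaves to the reader.
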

\begin{proof}
By \cite[Theorem 4]{0}, assume first that 
$G= K_{s,t} (s,t\geq 1)$. 
If 
$s=t$, 
then 
$D(G)=t+1$. 
The assumption
$D(G)=n-\ell$ 
concludes that
$t+1=s+t-\ell$. 
So, 
$s=t=\ell+1$.  
This implies that 
$G= K_{\ell+1, \ell+1}$ 
and the result in (a) is obtained. 
If 
$s\neq t$, 
then 
$D(G)=\max \{s, t\}$. 
Let 
$\max \{s, t\}=t$. 
Then 
$t=s+t-\ell$, 
and so 
$s=\ell$. 
Hence, 
$G=K_{t,\ell}$ 
for 
$t\geq \ell+1$. 
Therefore, we have the result in (b). 

Let $G=K_s + \overline{K_t} (s\geq 1, t\geq 2)$. 
One can see that 
$D(G)=\max \{s, t\}$. 
If 
$\max \{s, t\}=t$, 
then 
$t=s+t-\ell$ 
and so 
$s=\ell$. 
This implies that 
$G=K_{\ell}+ \overline{K_t}$,  
and 
$G$ 
is the graph in (c). 
If 
$\max \{s, t\}=s$, 
then 
$t=\ell$ 
and $G$ is the graph in (d).

Finally, suppose that 
$G= K_s + (K_1 \cup K_t) (s,t \geq 1)$. 
Thus, 
$D(G)=\max \{s, t\}$. 
If 
$\max \{s, t\}=t$ ($\max \{s, t\}=s$), 
then 
$s=\ell -1$ ($t=\ell -1$). 
In such a situation, we have obtained the graphs in 
(e) 
and 
(f).
\end{proof}

\begin{theorem}\label{then-2}
Let 
$G$ 
be a graph of order 
$n$. 
Then 
$D(G)=n-1$ 
if and only if 
$G$ 
is one of the following: 
\begin{itemize}
\begin{small}
\begin{multicols}{2}
\item[(1)] 
$C_4$ 
\item[(3)] 
$2K_{2}$
\item[(2)] 
$K_{t, 1}$
\item[(4)] 
$K_{t}\cup K_1$
\end{multicols}
\end{small}
\end{itemize}
where 
$t\geq 2$.
\end{theorem}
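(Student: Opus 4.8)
The plan is to dichotomize on whether $G$ is connected: in the connected case I will use Theorem~\ref{main} to push the problem onto the metric-dimension classification of \cite{0}, and in the disconnected case I will pass to the complement.

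Since $D(G)=n-1<n$, Remark~\ref{r1} gives at once $G\notin\{K_n,\overline{K_n}\}$. \emph{Suppose first $G$ is connected.} Theorem~\ref{main} yields $n-1=D(G)\le\dim(G)+1$, so $\dim(G)\ge n-2$; with the trivial bound $\dim(G)\le n-1$ this leaves only $\dim(G)\in\{n-1,n-2\}$. If $\dim(G)=n-1$ then \cite[Theorem~3]{0} forces $G=K_n$, which we have excluded, so $\dim(G)=n-2$. For $n\ge4$ I then invoke Lemma~\ref{lemn-2} with $\ell=1$: among the six families (a)--(f) there, families (d), (e), (f) require $\ell\ge2$ and so are vacuous, while family (c) with $\ell=1$ is $K_1+\overline{K_t}=K_{1,t}$, which is already family (b); hence $G\in\{C_4\}\cup\{K_{t,1}:t\ge2\}$. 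The small orders are handled by hand: for $n\le2$ a connected graph has $\dim(G)\ge1>n-2$, impossible, and for $n=3$ the condition $\dim(G)=1$ forces $G=P_3=K_{2,1}$, which has $D(G)=2=n-1$. So in the connected case $G$ is (1) or (2).

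\emph{Now suppose $G$ is disconnected.} Then $\overline{G}$ is connected of order $n$ with $D(\overline{G})=D(G)=n-1$, so the connected case applied to $\overline{G}$ gives $\overline{G}\in\{C_4\}\cup\{K_{t,1}:t\ge2\}$, whence $G\in\{\overline{C_4}\}\cup\{\overline{K_{t,1}}:t\ge2\}=\{2K_2\}\cup\{K_t\cup K_1:t\ge2\}$, i.e.\ families (3) and (4). Each of these is genuinely disconnected, so nothing is lost; this is in effect the complementation argument behind Lemma~\ref{discon}, which I prefer to run directly here since the condition $D(G)=|V(G)|-1$ varies with the order and is not a single fixed value $t$. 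For the converse I would check that each of (1)--(4) has $D=n-1$: $D(C_4)=3$ by inspection; $K_{1,t}$ has automorphism group $S_t$ acting on its $t$ leaves, so a distinguishing coloring needs $t$ distinct colors there, giving $D=t=n-1$; and since $\overline{2K_2}=C_4$ and $\overline{K_t\cup K_1}=K_1+\overline{K_t}=K_{1,t}$, the last two graphs inherit $D=n-1$ from the equality $D(H)=D(\overline{H})$.

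The only step needing real care is the use of Lemma~\ref{lemn-2}: one must be sure that setting $\ell=1$ genuinely eliminates families (d)--(f) and collapses (c) into (b), and that the orders $n\le3$ outside that lemma's scope are correctly accounted for. This is routine bookkeeping rather than a true obstacle; everything else follows immediately from Theorem~\ref{main}, Remark~\ref{r1}, Lemma~\ref{lemn-2} and \cite{0}.
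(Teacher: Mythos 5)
Your argument is correct and follows essentially the same route as the paper: Theorem~\ref{main} together with \cite[Theorem~3]{0} and Remark~\ref{r1} reduces the connected case to $\dim(G)=n-2$, Lemma~\ref{lemn-2} with $\ell=1$ yields families (1)--(2), and complementation (the content of Lemma~\ref{discon}, which you simply run directly) gives (3)--(4). Your explicit treatment of the orders $n\le 3$ and of the converse direction only makes the bookkeeping slightly more complete than the paper's parenthetical remark.
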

\begin{proof} 
Assume first that 
$G$ 
is a connected graph. By Theorem \ref{main}, the metric dimension of 
$G$ 
is equal to 
$n-1$ 
or 
$n-2$. 
\cite[Theorem 3]{0} and Remark \ref{r1} conclude that 
${\rm dim}(G)\neq n-1$. 
So, it suffices to check the graphs 
$G$ 
with 
${\rm dim}(G)= n-2$.
For this, let 
$\ell=1$  
in Lemma \ref{lemn-2}. The graphs in (a) and (b) in Lemma \ref{lemn-2} are the graphs in (1) and (2), respectively. Also, the graph in (c) in Lemma \ref{lemn-2} is appeared in (2). 
Since 
$\ell =1$, $G$ 
will not be equal to any graphs in (d), (e) and (f) in Lemma \ref{lemn-2}.

Now, by Lemma \ref{discon}, we have graphs in (3) and (4) in the characterization as the complements of graphs in (1) and (2), respectively. 
(Note that the only graph with less than $4$ vertices that applies to the assumptions of the theorem is $P_3$, which itself and its complement are in (2) and (4), respectively.)
\end{proof} 
\section{Graphs $G$ with $D(G)=|V(G)|-2$} 
In this section, we investigate graphs 
$G$ 
with 
$D(G)=|V(G)|-2$. 
Our method is based on the use of existing characteristics for the metric dimension and using Theorem \ref{main}. For this purpose, 
we use \cite[Theorem 2.14]{Hernando} and \cite[Theorem 1]{Jannesari} in which the metric dimension of the graphs with the given diameter is specified, 
and the graphs with metric dimension $n-3$ are characterized, respectively. 
In the following, we will express some definitions of the required prerequisites.
Let 
$G$ 
be a graph and 
$v, u \in V(G)$. 
The vertices 
$v$ 
and 
$u$ 
are {\it twin} if 
${\rm N}(v) \setminus \{u\}={\rm N}(u) \setminus \{u\}$. 
Let 
$u \equiv v$ 
if and only if 
$u = v$ 
or 
$u, v$ 
are twins. 
Let 
$v^{\ast}= \{u \in V(G) \; | \; u \equiv v\}$. 
The graph $G^{\ast}$, known as the twin graph of $G$,  
is the graph with 
$V(G^{\ast})=\{v^{\ast} \; | \; v \in V(G) \}$ 
and 
$E(G^{\ast})=\{v^{\ast} u^{\ast} \; | \; uv \in E(G) \}$.  
In fact, 
$u$ 
and 
$v$ 
are adjacent in 
$G$ 
if and only if each vertex in 
$u^{\ast}$ is adjacent to each vertex in
$v^{\ast}$. 
The graph 
$G[v^{\ast}]$ 
can only have two types of structures, which are divided into three types here. 

\begin{center}
$G[v^{\ast}] \cong \begin{cases}
K_1 & \text{if}\; v^{\ast}\; \text{is of type (1),}\; \\
K_r, r\geq 2 & \text{if}\; v^{\ast}\; \text{is of type (K),}\; \\ 
\overline{K_r}, r\geq 2 & \text{if}\; v^{\ast}\; \text{is of type (N).} 
\end{cases}$ 
\end{center}
Also, we say that 
$v^{\ast} \in G^{\ast}$
is of type (1K) if 
$v^{\ast}$
is of type (1) or (K), of type (1N) if 
$v^{\ast}$
is of type (1) or (N), of type (KN) if 
$v^{\ast}$
is of type (K) or (N), and of type (1KN) if 
$v^{\ast}$
is of type (1), (K) or (N).
Let 
$\alpha (G^{\ast})$
denote the number of vertices
of 
$G^{\ast}$
of type (K) or (N). 
It is clear that the graph 
$G$ 
is uniquely defined by the graph 
$G^{\ast}$. 
So we can express our classification on the graph 
$G^{\ast}$. For more details about twin graphs, you can refer to \cite{Hernando, Jannesari}. 

Let $(v_1, v_2, \ldots, v_n)$ denoted the path $P_n$ on the vertices $v_1, v_2, \ldots, v_n$. 

\begin{observation} \label{obser}
For positive integers 
$n$ 
and 
$m$, 
let 
$G$ 
be a connected graph of order 
$n$.  
Let 
$V(G^{\ast})=\{v^{\ast}_1, v^{\ast}_2, \ldots, v^{\ast}_{|V(G^{\ast})|}\}$  
and 
$D(G) =|v^{\ast}_i|$  
for a 
$i (1\leq i \leq |V(G^{\ast})|)$.
If 
$|v^{\ast}_1|+|v^{\ast}_2|+ \ldots +|v^{\ast}_{i-1}|+|v^{\ast}_{i+1}|+\ldots +|v^{\ast}_{|V(G^{\ast})|}| \neq m$,  
then 
$D(G) \neq n-m$.
\end{observation}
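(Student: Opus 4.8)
The plan is to treat this as an elementary counting fact about the partition of $V(G)$ into twin classes, rather than anything that needs to inspect automorphisms directly.

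First I would recall that $\equiv$ is an equivalence relation on $V(G)$ (this is the standard fact underlying the very definition of the twin graph $G^{\ast}$; see \cite{Hernando, Jannesari}), so the twin classes $v^{\ast}_1, v^{\ast}_2, \ldots, v^{\ast}_{|V(G^{\ast})|}$ are pairwise disjoint and their union is $V(G)$. Hence
\[
\sum_{j=1}^{|V(G^{\ast})|} |v^{\ast}_j| = |V(G)| = n .
\]
Next, using the hypothesis $D(G)=|v^{\ast}_i|$, I would rewrite this identity as
\[
|v^{\ast}_1|+ \cdots +|v^{\ast}_{i-1}|+|v^{\ast}_{i+1}|+ \cdots +|v^{\ast}_{|V(G^{\ast})|}| \;=\; n - |v^{\ast}_i| \;=\; n - D(G).
\]

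Finally I would argue by contraposition. Suppose, contrary to the conclusion, that $D(G)=n-m$. Substituting this into the displayed identity yields $\sum_{j\neq i}|v^{\ast}_j| = n-(n-m)=m$, which contradicts the hypothesis $\sum_{j\neq i}|v^{\ast}_j|\neq m$. Therefore $D(G)\neq n-m$, as claimed.

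I do not expect a genuine obstacle here: the whole content is that the twin classes partition the vertex set, so their sizes add up to $n$, and the stated inequality is just the contrapositive reformulation of $n-D(G)\neq m$. The only step that deserves explicit mention is the transitivity of $\equiv$ (equivalently, that $v^{\ast}$ is a well-defined equivalence class), which is precisely the property the twin graph construction relies on. In the remainder of this section the observation then serves purely as a bookkeeping shortcut: once a twin class whose size equals $D(G)$ has been located in $G^{\ast}$, one reads off the unique admissible value of $n-D(G)$ directly from the sizes of the other classes.
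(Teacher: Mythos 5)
Your proof is correct and is exactly the intended argument: the paper states this as an Observation without proof, and the evident justification is precisely yours, namely that the twin classes partition $V(G)$ so their sizes sum to $n$, whence $D(G)=|v^{\ast}_i|$ and $D(G)=n-m$ would force $\sum_{j\neq i}|v^{\ast}_j|=m$. Your remark about $\equiv$ being an equivalence relation (so the classes are well defined and disjoint) is the only hypothesis worth citing, and it is indeed supplied by \cite{Hernando, Jannesari}.
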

For any graph 
$G$, 
we say 
$G^{\ast}$ 
is an {\it almost asymmetric} graph if there is no non-trivial automorphism of 
$G$ 
that images some vertices of 
$v^{\ast}_i$ 
to 
some vertices of 
$v^{\ast}_j$  
for any two vertices 
$v^{\ast}_i$ 
and
$v^{\ast}_j$ 
of 
$G^{\ast}$. 
Clearly, if 
$G^{\ast}$ 
is almost asymmetric, then 
$D(G)= \max\limits_{v^{\ast} \in V(G^{\ast})} \{|v^{\ast}| \; | \; v^{\ast} \in G^{\ast} \}$. 
\begin{corollary}\label{lem-almost-asymmetric}
For positive integers 
$n$ 
and 
$m$, 
let 
$G$ 
be a connected graph of order 
$n$. 
If 
$G^{\ast}$ 
is almost asymmetric and 
$n - \max\limits_{v^{\ast} \in V(G^{\ast})} \{|v^{\ast}| \; | \; v^{\ast} \in G^{\ast} \}  \neq m$, 
then 
$D(G) \neq n-m$.
\end{corollary}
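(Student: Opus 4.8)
The plan is to read this off directly from Observation~\ref{obser} together with the displayed identity $D(G)=\max_{v^{\ast}\in V(G^{\ast})}\{|v^{\ast}|\}$ recorded just before the corollary, which holds precisely because $G^{\ast}$ is almost asymmetric.

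First I would set $M=\max_{v^{\ast}\in V(G^{\ast})}\{|v^{\ast}|\;\mid\;v^{\ast}\in G^{\ast}\}$ and fix an index $i$ with $|v^{\ast}_i|=M$; such an $i$ exists because the maximum is attained over the finite set $V(G^{\ast})$. Since $G^{\ast}$ is almost asymmetric, the remark preceding the corollary gives $D(G)=M=|v^{\ast}_i|$, so the hypothesis of Observation~\ref{obser}, namely that $D(G)=|v^{\ast}_i|$ for some $i$, is satisfied.

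Next, since $\{v^{\ast}_1,\dots,v^{\ast}_{|V(G^{\ast})|}\}$ is a partition of $V(G)$, the part sizes satisfy $\sum_{j}|v^{\ast}_j|=n$, and hence the sum of the sizes of all parts other than $v^{\ast}_i$ equals $n-|v^{\ast}_i|=n-M$. By the hypothesis $n-M\neq m$, this leftover sum differs from $m$, so Observation~\ref{obser} applies and yields $D(G)\neq n-m$, which is exactly the claim. (Equivalently, one could argue by contradiction: if $D(G)=n-m$ then $M=n-m$, i.e.\ $n-M=m$, contradicting the hypothesis.)

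There is essentially no obstacle in the corollary itself; all of the substance lies in the identity $D(G)=\max|v^{\ast}|$ for almost asymmetric twin graphs, which is the remark already in place (color each twin class with $|v^{\ast}|$ distinct colors: $\max|v^{\ast}|$ colors suffice, and almost asymmetry forbids any automorphism mixing distinct classes, so every color-preserving automorphism is trivial; fewer colors cannot break the symmetric group acting on a largest twin class). The only minor points of care are that the maximum over $V(G^{\ast})$ is attained, so a witnessing index $i$ genuinely exists, and that the twin classes form a partition of $V(G)$, so the sizes of the remaining classes add up to $n-M$.
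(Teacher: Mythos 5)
Your proposal is correct and matches the paper's (implicit) argument: the paper derives this corollary directly from the remark that $D(G)=\max_{v^{\ast}\in V(G^{\ast})}|v^{\ast}|$ when $G^{\ast}$ is almost asymmetric, combined with Observation~\ref{obser}, exactly as you do. Your added justification of that identity and the contradiction variant are fine but not a different route.
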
 
\newpage
In the following, in  Lemmas \ref{G1}, \ref{G2}, \ref{G3}, \ref{G5}, \ref{G6}, \ref{Gal}, we investigate the distinguishing number of the graphs introduced in \cite[Theorem 1]{Jannesari}. 
\begin{lemma}\label{G1}
If $G^{\ast}$ is graph $G_1$ in \cite[Theorem 1]{Jannesari}, then
\begin{itemize} 
\item[(1)] 
$D(G)=|V(G)|-2$ 
if and only if 
$G$ 
is one of the following: 
\begin{itemize}
\begin{small}
\begin{multicols}{2}
\item[(1a)] 
$K_{1, 1, t}$, $t\geq 2$
\item[(1b)] 
$K_{1, 2, 2}$
\end{multicols}
\end{small}
\end{itemize} 
\item[(2)] 
$D(G)=|V(G)|-3$ 
if and only if 
$G$ 
is one of the following: 

\begin{itemize}
\begin{small}
\begin{multicols}{2}
\item[(2a)] 
$K_{1, 2, t}$, $t\geq 3$
\item[(2c)] 
$K_2 + K_{2, 2}$ 
\item[(2b)] 
$K_{1, 3, 3}$
\item[(2d)] 
$K_{2, 2, 2}$
\end{multicols}
\end{small}
\end{itemize} 
\end{itemize}
\end{lemma}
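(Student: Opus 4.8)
The plan is to combine the structural description of $G_1$ from \cite[Theorem 1]{Jannesari} with our Theorem \ref{main}, Observation \ref{obser}, and Corollary \ref{lem-almost-asymmetric}. First I would recall what the graph $G_1$ in the Jannesari classification looks like: it is one of the twin-graph shapes arising in graphs with $\mathrm{dim}(G)=|V(G)|-3$, so $G^{\ast}=G_1$ is a small fixed graph (on a bounded number of vertices) with prescribed types (1), (K), (N) assigned to its vertices, and the multiplicities $|v^{\ast}|$ of the type-(K) and type-(N) vertices are free parameters subject to the constraints forcing $\mathrm{dim}(G)=n-3$. Since $\mathrm{dim}(G)=n-3$, Theorem \ref{main} already gives $D(G)\le n-2$, so only the cases $D(G)=n-2$ and $D(G)=n-3$ are live; this is exactly why the lemma has its two parts.

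Next I would set up the counting mechanism. Write $n=|V(G)|=\sum_{j}|v^{\ast}_j|$. By Observation \ref{obser}, $D(G)=n-m$ can only happen if some single twin class $v^{\ast}_i$ has $|v^{\ast}_i|=D(G)$ and the remaining classes contribute exactly $m$ vertices in total; equivalently, writing $M=\max_{v^{\ast}}|v^{\ast}|$, when $G^{\ast}$ is almost asymmetric we have $D(G)=M$ by the remark preceding Corollary \ref{lem-almost-asymmetric}, and then $D(G)=n-m$ forces $n-M=m$. So the strategy is: (i) verify that $G_1$ (with its fixed type assignment) is almost asymmetric, or more precisely analyze its automorphisms — the only possible color-swapping automorphisms come from symmetries of the small graph $G_1$ that exchange two twin classes of the same type and same multiplicity; (ii) determine, as a function of the free multiplicities, the value $n-M$, and set it equal to $2$ (for part (1)) or $3$ (for part (2)); (iii) solve the resulting small Diophantine system to read off precisely which graphs occur, checking in each solution whether the swap-type automorphisms are actually present (if two symmetric twin classes are forced to have equal multiplicity, $D(G)=M+1$ rather than $M$, which shifts the arithmetic). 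Finally, translate each surviving parameter choice back into the familiar graph name: $K_{1,1,t}$, $K_{1,2,2}$, $K_{1,2,t}$, $K_{1,3,3}$, $K_2+K_{2,2}$, $K_{2,2,2}$ — i.e. recognize that reinflating $G_1$ with those multiplicities gives a complete multipartite graph or a small join.

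I would organize the write-up by going through the vertices of $G_1$ one type at a time: for each vertex $v^{\ast}$ of type (K) or (N), its class has size $\ge 2$ and contributes to $n-M$; vertices of type (1) contribute $1$ each. Because $G_1$ has only a handful of vertices, $n-M$ is a sum of a bounded list of terms, at most one of which (the one realizing the max) is subtracted off, so the equation $n-M=2$ or $3$ has only finitely many solutions and each is checked by hand. The genuine subtlety — and the step I expect to be the main obstacle — is handling the automorphisms that are \emph{not} captured by "almost asymmetric": if $G_1$ has a geometric symmetry exchanging two equal-type twin classes, then at the parameter values where those two classes have equal size, the bound $D(G)=M$ fails and is replaced by $D(G)=M+1$, so one must carefully exclude (or include) the corresponding parameter values; $K_{1,1,t}$ versus $K_{1,2,2}$ and $K_{1,3,3}$ versus $K_{1,2,t}$ are precisely the boundary cases where this distinction matters. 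So the real work is a careful automorphism analysis of the single small graph $G_1$ under all admissible colorings, after which the counting is routine.
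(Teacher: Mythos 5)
Your plan is essentially the paper's own argument: the paper likewise runs a case analysis over the type of the flexible vertex of $G_1$ and over whether the remaining twin classes have equal sizes, uses $D(G)=\max_{v^{\ast}}|v^{\ast}|$ in the almost asymmetric cases and $|v^{\ast}|+1$ when two same-type classes of equal size can be swapped, and then solves the resulting small equations $D(G)=|V(G)|-\ell$ for $\ell\in\{2,3\}$ to read off the graphs (1a)--(2d). The only difference is that you state the scheme while the paper executes the finitely many cases explicitly, including the boundary cases ($K_{1,2,2}$, $K_{1,3,3}$, $K_2+K_{2,2}$, $K_{2,2,2}$) that you correctly flag as the delicate ones.
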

\begin{proof} 
Let 
$\ell \in \{2,3\}$.
Assume first that  
$v^{\ast}_1$ 
is a vertex of $G^{\ast}$ of type (1). Then 
$v^{\ast}_2$ 
and 
$v^{\ast}_3$ 
are vertices of $G^{\ast}$ of type (N). If 
$|v^{\ast}_2|\neq |v^{\ast}_3|$, 
then   
$D(G)=\max \{|v^{\ast}_2|, |v^{\ast}_3|\}$. 
Let 
$\max \{|v^{\ast}_2|, |v^{\ast}_3|\}=|v^{\ast}_2|$. 
Hence, 
$D(G)=|V(G)|-\ell$ 
concludes that  
$|v^{\ast}_3|=\ell-1$. 
Then we have the graphs in (1a) and (2a) when $\ell = 2$ and $\ell = 3$, respectively. 
If 
$|v^{\ast}_2|=|v^{\ast}_3|$, 
$D(G)=|v^{\ast}_2|+1$. 
So, the equation 
$|v^{\ast}_2|+1=|v^{\ast}_2|+|v^{\ast}_3|+1-\ell$ 
results in 
$|v^{\ast}_3|=\ell$. 
In this situation, we have obtained the  graphs in (1b) and (2b) when $\ell = 2$ and $\ell = 3$, respectively. 

Let 
$v^{\ast}_1$ 
be of type (K). 
If 
$|v^{\ast}_2|\neq |v^{\ast}_3|$, 
then   
$D(G)=\max \{|v^{\ast}_1|, |v^{\ast}_2|, |v^{\ast}_3|\}$. 
Without loss of generality, we may assume that 
$\max \{|v^{\ast}_1|, |v^{\ast}_2|, |v^{\ast}_3|\}=|v^{\ast}_1|$. 
The assumption
$D(G)=|V(G)|-\ell$ 
concludes that 
$|v^{\ast}_2|+|v^{\ast}_3|=\ell$, 
which is impossible. 
If 
$|v^{\ast}_2|= |v^{\ast}_3|$, 
then   
$D(G)=\max \{|v^{\ast}_1|, |v^{\ast}_2|+1\}$. 
If  
$D(G)=|v^{\ast}_1|$,   
then the size of $|v^{\ast}_2|$ will be equal to 1 or $3/2$, which is not possible. 
If 
$D(G)=|v^{\ast}_2|+1$, 
then 
$|v^{\ast}_1| + |v^{\ast}_3|=\ell +1$. 
This implies that 
$\ell = 3$ 
and 
$|v^{\ast}_1|= |v^{\ast}_2|= |v^{\ast}_3|=2$. 
This is the graph in (2c).

Suppose that 
$v^{\ast}_1$ 
is of type (N). 
If 
$|v^{\ast}_1|\neq |v^{\ast}_2|\neq |v^{\ast}_3| \neq |v^{\ast}_1|$, 
then   
$D(G)=\max \{|v^{\ast}_1|, |v^{\ast}_2|, |v^{\ast}_3|\}$. 
Let
$\max \{|v^{\ast}_1|, |v^{\ast}_2|, |v^{\ast}_3|\}=|v^{\ast}_1|$. 
Hence,
$|v^{\ast}_2|+|v^{\ast}_3|=\ell$, 
which is impossible.  
If 
$|v^{\ast}_1|=|v^{\ast}_2|\neq |v^{\ast}_3|$,  
then
$D(G)=\max \{|v^{\ast}_1|+1, |v^{\ast}_3|\}$. 
If 
$D(G)=|v^{\ast}_3|$, 
then 
$|v^{\ast}_1| \in \{1, 3/2\}$, 
which contradicts the definition of type (N).
If 
$D(G)=|v^{\ast}_1|+1$, 
then 
$\ell =3$ 
and so 
$|v^{\ast}_2|= |v^{\ast}_3|=2$, 
a contradiction.
If 
$|v^{\ast}_1|=|v^{\ast}_2|= |v^{\ast}_3|$,  
then
$D(G)=|v^{\ast}_1|+1$ 
and we have obtained the graph in (2d).
\end{proof}
\begin{lemma}\label{G2}
If $G^{\ast}$ is graph $G_2$ in \cite[Theorem 1]{Jannesari}, then
$D(G)\neq |V(G)|-2$; 
and 
$D(G)= |V(G)|-3$
if and only if 
$G$ 
is one of the following: 
\begin{itemize}
\begin{small}
\begin{multicols}{2}
\item[(a)] 
$\overline{K_2}+(K_1 \cup K_t )$, $t\geq 2$
\item[(c)] 
$\overline{K_2} + 2K_{2}$
\item[(b)] 
$\overline{K_t}+(K_1 \cup K_2)$, $t\geq 2$
\item[(d)] 
$K_2 + 2K_{2}$
\end{multicols}
\end{small}
\end{itemize}
\end{lemma}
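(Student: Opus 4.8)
\textbf{Proof proposal for Lemma~\ref{G2}.}
The plan is to mirror the case analysis of Lemma~\ref{G1}, but now adapted to the structure of the graph $G_2$ of \cite[Theorem 1]{Jannesari}. First I would recall the precise description of $G_2$: it is the twin graph on three vertices $v_1^{\ast}, v_2^{\ast}, v_3^{\ast}$ realizing a path $(v_1^{\ast}, v_2^{\ast}, v_3^{\ast})$ (as opposed to the triangle that underlies $G_1$), with prescribed type constraints on the three twin classes. The key observation is that whenever $G^{\ast}$ has only three vertices, any nontrivial automorphism of $G$ can only permute vertices inside the twin classes $v_i^{\ast}$ or exchange two twin classes that play symmetric roles in $G^{\ast}$; since in a path $P_3$ only the two endpoints are interchangeable, I would argue that $G^{\ast}$ is almost asymmetric unless the two end-classes have equal size and equal type, and in that degenerate case the distinguishing number picks up an extra $+1$ exactly as in Lemma~\ref{G1}. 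This lets me invoke Corollary~\ref{lem-almost-asymmetric} (together with Observation~\ref{obser}) to translate the condition $D(G) = n - m$ into arithmetic on the sizes $|v_i^{\ast}|$, for $m \in \{2,3\}$.

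The main steps, in order: (1) split into cases according to the type — (1), (K), or (N) — of the \emph{central} vertex $v_2^{\ast}$ of the path, and within each, subcase on whether the two endpoint classes have equal size; (2) in each subcase write $D(G)$ as either $\max\{|v_1^{\ast}|, |v_2^{\ast}|, |v_3^{\ast}|\}$ (almost asymmetric case) or that max with a $+1$ forced by a pair of equal-size endpoint classes of the same type; (3) impose $n - D(G) = m$ and solve the resulting linear equation in the $|v_i^{\ast}|$, discarding solutions that violate a type constraint (e.g. forcing a class of type (N) or (K) to have size $1$, or forcing a non-integer size $3/2$, just as happens in Lemma~\ref{G1}); (4) read off the surviving graphs. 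For $m = 2$ I expect every branch to collapse — the arithmetic will repeatedly demand that the two ``removed'' classes together have size $2$ while one of them is constrained to have size $\ge 2$ and the other $\ge 1$ with at least one forced to be a nontrivial (K) or (N) class — so no graph survives and $D(G) \neq n-2$. For $m = 3$ the four surviving configurations should be exactly $\overline{K_2} + (K_1 \cup K_t)$, $\overline{K_t} + (K_1 \cup K_2)$, $\overline{K_2} + 2K_2$, and $K_2 + 2K_2$, obtained respectively from taking the two non-central classes to be a $\overline{K_2}$ together with a size-$t$ twin class adjacent to an isolated class, a $\overline{K_t}$ with a $\overline{K_2}$/$K_2$ pattern, and the two ``$2K_2$'' patterns where the central class is $\overline{K_2}$ or $K_2$ of size $2$.

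The subtlety I would flag is the bookkeeping of which twin class is ``large'' (i.e. realizes $D(G)$) versus which ones are ``small'' enough to be contracted away when computing $n - D(G)$: because $G_2$ is a path rather than a triangle, the roles of $v_1^{\ast}$ and $v_3^{\ast}$ are symmetric but that of $v_2^{\ast}$ is not, so ``without loss of generality'' arguments must be applied only to the endpoint pair, not to all three classes. A second point requiring care is verifying that the four claimed graphs for $m=3$ are genuinely almost asymmetric (so that the computed $D(G)$ is correct and not an over- or under-estimate): I would check directly that in each, any color-preserving automorphism fixing the color classes setwise is forced to be trivial, which follows because the non-central classes have distinct sizes or distinct types and hence cannot be swapped. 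The hard part will be making sure the case split is exhaustive — in particular not overlooking the mixed subcase where exactly one endpoint class equals the central class in size while the other endpoint differs — and confirming that each such borderline subcase either yields one of the four listed graphs or is eliminated by a type constraint; this is exactly the kind of place where Lemma~\ref{G1}'s proof had to rule out sizes like $1$ and $3/2$, and the same elimination should recur here.
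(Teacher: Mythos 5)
Your overall strategy is the same as the paper's: treat $G_2$ as a three-class twin graph shaped like $P_3$, determine $D(G)$ from the sizes and types of the classes (almost asymmetric case versus the case of interchangeable endpoint classes), and solve $D(G)=n-\ell$ for $\ell\in\{2,3\}$; the paper merely organizes the cases by Jannesari's two prescribed type patterns $G_2$(a) and $G_2$(b) instead of by the type of the centre class, and its $\ell=2$ branch collapses exactly as you predict. However, two of your stated steps would fail as written. First, the formula in your step (2) for the symmetric case is wrong: when the two endpoint classes have equal size and equal type, the correct value is $D(G)=\max\{|v_{\mathrm{end}}^{\ast}|+1,\ |v_{\mathrm{centre}}^{\ast}|\}$, i.e.\ the $+1$ attaches only to the endpoint size, not $\max\{|v_1^{\ast}|,|v_2^{\ast}|,|v_3^{\ast}|\}+1$. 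Taken literally, your formula gives $D(\overline{K_t}+2K_2)=t+1=n-3$ for every $t\geq 3$, whereas in fact $D(\overline{K_t}+2K_2)=t=n-4$ there (colour the centre class with $t$ distinct colours and give the two endpoint $K_2$'s the colour sets $\{1,2\}$ and $\{1,3\}$); so the literal formula would insert spurious graphs into the $\ell=3$ list, which the lemma admits only for $t=2$, namely item (c).

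Second, your closing verification asserts that each of the four listed graphs is almost asymmetric because ``the non-central classes have distinct sizes or distinct types and hence cannot be swapped.'' That is false for (c) $\overline{K_2}+2K_2$ and (d) $K_2+2K_2$: in both, the two endpoint classes are copies of $K_2$ of the same type and size, and the automorphism exchanging them is precisely the symmetry that forces the $+1$ and yields $D=3=n-3$. An almost-asymmetry computation (Corollary \ref{lem-almost-asymmetric} with $D(G)=\max_i|v_i^{\ast}|=2$) would wrongly exclude these two graphs from the list. Both slips are repairable inside your own framework—your earlier ``degenerate endpoint'' case already contains the correct mechanism, and it is the one the paper's proof uses to produce (c) and (d)—but as written, step (2) and the verification paragraph contradict each other and would not deliver the stated classification.
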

\begin{proof} 
Let 
$\ell \in \{2,3\}$ 
and 
$D(G)= |V(G)|-\ell$.
In $G_2$(a), let 
$v^{\ast}_1$, 
$v^{\ast}_2$ 
and
$v^{\ast}_3$
be the vertices of types 
(K), (N) 
and any type, respectively. Suppose first that 
$v^{\ast}_3$ 
is of type (1). Thus 
$D(G)=\max \{|v^{\ast}_1|, |v^{\ast}_2|\}$. 
If  
$\max \{|v^{\ast}_1|, |v^{\ast}_2|\}= |v^{\ast}_1|$, 
then 
$|v^{\ast}_1|= |v^{\ast}_1|+|v^{\ast}_2|+1-\ell$ 
and so 
$|v^{\ast}_2|=\ell - 1$. 
This implies that 
$\ell = 3$ 
and 
$G= \overline{K_2}+(K_1 \cup K_t )$, $t\geq 2$. 
Therefore we will have the graph in (a). Also, if 
$\max \{|v^{\ast}_1|, |v^{\ast}_2|\}= |v^{\ast}_2|$, 
then 
$G= \overline{K_t}+(K_1 \cup K_2)$ with $t\geq 2$, 
the graph in (b). Let 
$v^{\ast}_3$ 
be of type (N). 
Then 
$D(G)=\max \{|v^{\ast}_1|, |v^{\ast}_2|, |v^{\ast}_3|\}$. 
Without loss of generality, we may assume that 
$\max \{|v^{\ast}_1|, |v^{\ast}_2|, |v^{\ast}_3|\}=|v^{\ast}_1|$. 
So, 
$|v^{\ast}_2|+|v^{\ast}_3|=\ell$, 
which is a contradiction. 
Assume that 
$v^{\ast}_3$ 
is of type (K). 
Clearly, 
$|v^{\ast}_1|=|v^{\ast}_3|$. 
Hence, 
$D(G)=\max \{|v^{\ast}_1|+1, |v^{\ast}_2|\}$. 
If 
$\max \{|v^{\ast}_1|+1, |v^{\ast}_2|\}=|v^{\ast}_2|$, 
then it is impossible as before. If 
$\max \{|v^{\ast}_1|+1, |v^{\ast}_2|\}=|v^{\ast}_1|+1$, 
then 
$\ell=3$ 
and 
$|v^{\ast}_1|= |v^{\ast}_2|= |v^{\ast}_3|=2$. 
Then, $G$ is the graph in (c).

In $G_2$(b), 
let 
$v^{\ast}_1$, 
$v^{\ast}_2$ 
and
$v^{\ast}_3$
be the vertices of types 
(K), any type
and (NK), respectively. 
If 
$v^{\ast}_3$ 
is of type (N), then the size of some vertices of $G^{\ast}$ will be equal to one, which is impossible. 
Let 
$v^{\ast}_3$ 
be of type (K). If 
$|v^{\ast}_1|\neq |v^{\ast}_3|$, 
then we reach the same contradiction as before. Thus 
$|v^{\ast}_1| = |v^{\ast}_3|$ 
and so 
$D(G)=\max \{|v^{\ast}_1|+1, |v^{\ast}_2|\}$. 
One can check that 
$D(G)\neq|v^{\ast}_2|$. 
So, 
$D(G)=|v^{\ast}_1|+1$. 
This concludes that 
$\ell=3$ 
and 
$|v^{\ast}_1|= |v^{\ast}_2|= |v^{\ast}_3|=2$. 
Therefore, 
$G=K_2 + 2K_{2}$ 
and we found the graph in (d). (Note that when $v^{\ast}_2$ is of type (N), we have already considered it in (c).)
\end{proof}
\begin{lemma}\label{G3}
If $G^{\ast}$ is graph $G_3$ in \cite[Theorem 1]{Jannesari}, then
$D(G)\neq |V(G)|-2$; 
and 
$D(G)= |V(G)|-3$
if and only if 
$G=K_1+(K_1 \cup (K_{1,t}))$ with $t\geq 2$. 
\end{lemma}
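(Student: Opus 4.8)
The plan is to follow the template already used for Lemmas \ref{G1} and \ref{G2}. Recall that $G_3$ of \cite[Theorem 1]{Jannesari} is a four-vertex graph — the paw $K_1+(K_1\cup K_2)$ — carrying the type restrictions specified there; write $V(G^{\ast})=\{v^{\ast}_1,v^{\ast}_2,v^{\ast}_3,v^{\ast}_4\}$, where $v^{\ast}_1$ is the pendant vertex, $v^{\ast}_2$ the vertex of degree $3$, and $v^{\ast}_3,v^{\ast}_4$ the two mutually adjacent vertices on the triangle. Fix $\ell\in\{2,3\}$ and assume $D(G)=|V(G)|-\ell$, where $n=|V(G)|=\sum_{i=1}^{4}|v^{\ast}_i|$. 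The first step is to express $D(G)$ in terms of the sizes $|v^{\ast}_i|$: the only non-identity automorphism of the paw interchanges $v^{\ast}_3$ and $v^{\ast}_4$, and it lifts to a colour-preserving automorphism of a colouring of $G$ only when $v^{\ast}_3$ and $v^{\ast}_4$ have the same type, the same size $s$, and the same colour-multiset. Hence there are two regimes: $(\mathrm{i})$ if $v^{\ast}_3$ and $v^{\ast}_4$ differ in type or in size, then $G^{\ast}$ is almost asymmetric and Corollary \ref{lem-almost-asymmetric} gives $D(G)=\max\{|v^{\ast}_1|,|v^{\ast}_2|,|v^{\ast}_3|,|v^{\ast}_4|\}$; $(\mathrm{ii})$ if $v^{\ast}_3,v^{\ast}_4$ share type and size $s$, then killing every automorphism of $G$ forces, and is achieved by, $\max\{|v^{\ast}_1|,|v^{\ast}_2|,s+1\}$ colours, so $D(G)=\max\{|v^{\ast}_1|,|v^{\ast}_2|,s+1\}$.

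Next I would run the two regimes against $D(G)=n-\ell$, case-splitting on the types of $v^{\ast}_1,\dots,v^{\ast}_4$ exactly as in Lemmas \ref{G1} and \ref{G2}. In regime $(\mathrm{i})$, letting $M$ be the largest of the four sizes, the equality $M=n-\ell$ forces the remaining three positive sizes to sum to $\ell$: impossible for $\ell=2$, and forcing each of them to equal $1$ when $\ell=3$. Trying each vertex in turn as the unique large class and invoking the type restrictions of $G_3$, every choice except ``$v^{\ast}_3$ (or symmetrically $v^{\ast}_4$) of type (N) and size $t\ge 2$ with $v^{\ast}_1,v^{\ast}_2,v^{\ast}_4$ single vertices of type (1)'' makes two of the $v^{\ast}_i$ coincide as one twin class and so falls outside $G_3$; the surviving choice is precisely $G=K_1+(K_1\cup K_{1,t})$. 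In regime $(\mathrm{ii})$, since $s\ge 2$ and $|v^{\ast}_1|,|v^{\ast}_2|\ge 1$ one checks that $n-D(G)\ge 3$ whichever of the three terms attains the maximum, so $\ell=2$ is again impossible; and for $\ell=3$ the arithmetic pins the parameters down to $s=2$, $|v^{\ast}_1|=|v^{\ast}_2|=1$, and in each of the two resulting sub-cases the graph is not a legitimate $G_3$ (either the four triangle vertices form a single twin class, or the graph is $K_1+(K_1\cup C_4)$, whose metric dimension is $n-4$, not $n-3$). This also establishes the first assertion $D(G)\neq n-2$.

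Finally I would verify the converse directly: for $t\ge 2$ the twin graph of $K_1+(K_1\cup K_{1,t})$ is indeed $G_3$, its metric dimension equals $n-3$, and since its only non-identity automorphisms permute the $t$ leaves one has $D(K_1+(K_1\cup K_{1,t}))=t=n-3$. The step I expect to be the main obstacle is the bookkeeping that rules out the ``near-miss'' type assignments — both the degenerate ones in regime $(\mathrm{i})$ and the two sub-cases in regime $(\mathrm{ii})$ — where the arithmetic $D(G)=n-\ell$ is satisfied but the graph fails to be a genuine realisation of $G_3$, either because two putative twin classes actually coincide (so $G$ is governed by a smaller twin graph, or by the $|V(G)|-2$ classification) or because its metric dimension is not $n-3$. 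Checking, in each such assignment, that a merge occurs (or that the metric dimension drops) is the only non-routine part; everything else is the same elementary arithmetic already carried out in Lemmas \ref{G1} and \ref{G2}.
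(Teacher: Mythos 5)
Your proposal is correct and follows the same basic template as the paper (express $D(G)$ through the twin-class sizes, set it against $n-\ell$, and do the arithmetic), but it diverges in one substantive point: how the possible symmetry between the two degree-$2$ classes of the paw is handled. The paper simply reads off the type constraints of $G_3$ in \cite[Theorem 1]{Jannesari} — one degree-$2$ vertex is of type (1K), the other of type (N) — so no automorphism of $G$ can carry vertices of one of these classes to the other; hence $G^{\ast}$ is almost asymmetric in every admissible case, $D(G)=\max_i|v^{\ast}_i|$, and since the type-(N) class has size at least $2$ the maximum always misses at least $3$ vertices, giving $D(G)\neq n-2$ and pinning down $D(G)=n-3$ exactly when the other three classes are singletons, i.e.\ $G=K_1+(K_1\cup K_{1,t})$. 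You, working without the type data, additionally analyse your regime (ii), where the two degree-$2$ classes share type and size, and you dispose of it correctly: the (K)--(K) configuration collapses the two classes into a single twin class (so the twin graph is no longer the paw), and the (N)--(N) configuration with $s=2$ yields $K_1+(K_1\cup K_{2,2})$, which has metric dimension $n-4$ rather than $n-3$ (it indeed reappears in Lemma \ref{dim n-4 lem1}(ii) of the paper, coming from \cite[Theorem 4.1]{dim n-4}), so it is not a realisation of $G_3$. Likewise your regime (i) bookkeeping — every placement of the unique large class other than a type-(N) degree-$2$ class forces two singleton classes to be closed twins and merge — matches what the paper's type hypotheses make automatic. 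So your argument buys independence from the precise type specifications in Jannesari's statement at the cost of extra twin-merging checks and one metric-dimension computation, while the paper's use of the (1K)/(N) hypothesis makes the proof a few lines; both reach the same characterisation.
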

\begin{proof} 
Let 
$v^{\ast}_1, v^{\ast}_2, v^{\ast}_3$ 
and 
$v^{\ast}_4$ 
be the degree-$2$ vertex of type (1K), the degree-$2$ vertex of type (N), the degree-$3$ vertex and the leaf, respectively.
Let 
$\ell \in \{2,3\}$ 
and 
$D(G)= |V(G)|-\ell$.
In all cases, there is no automorphism of $G$ that images some vertices of 
$v^{\ast}_1$ 
to some vertices of 
$v^{\ast}_2$. 
Hence, 
$D(G)=\max \{|v^{\ast}_1|, |v^{\ast}_2|, |v^{\ast}_3|, |v^{\ast}_4|\}$. 
If 
$D(G)=|v^{\ast}_i|$, $i\in \{1, 3, 4 \}$, 
then 
$D(G)=n-\ell$ 
concludes that some of 
$|v^{\ast}_i|$'s 
must be equal to one or zero. 
It is impossible. 
If 
$D(G)=|v^{\ast}_2|$, 
then 
$\ell = 3$, 
$|v^{\ast}_1|=|v^{\ast}_3|=|v^{\ast}_4|=1$ 
and 
$G=K_1+(K_1 \cup K_{1,t})$ with $t\geq 2$. 
\end{proof}
\begin{lemma}\label{G5}
If $G^{\ast}$ is graph $G_5$ in \cite[Theorem 1]{Jannesari}, then $D(G)\neq |V(G)|-2$; 
and 
$D(G)= |V(G)|-3$
if and only if 
$G=C_5^{'}$.
\end{lemma}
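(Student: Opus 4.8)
The plan is to follow the template of Lemmas \ref{G1}--\ref{G3}. First I would recall from \cite[Theorem 1]{Jannesari} the exact description of $G_5$ and the type constraints it imposes on each of its vertices, and label the vertices $v^{\ast}_1, v^{\ast}_2, \ldots$ by their roles inside $G_5$ (degrees, adjacency pattern), as was done for the paw $G_3$. Because $G$ is determined by $G^{\ast}=G_5$ together with the sizes and types of the classes $v^{\ast}_i$, the statement becomes a finite bookkeeping problem on the integers $|v^{\ast}_i|$: I must show no choice yields $D(G)=|V(G)|-2$, and the only choice yielding $D(G)=|V(G)|-3$ reconstructs $G=C_5'$.

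The first substantive step is to understand the automorphisms of the blown-up graph $G$. For each admissible type assignment I would argue that no automorphism of $G$ sends a vertex of a class $v^{\ast}_i$ into a different class $v^{\ast}_j$, that is, that $G^{\ast}$ is almost asymmetric in the sense of the definition preceding Corollary \ref{lem-almost-asymmetric}; this is where the structure of $G_5$ is really used, and I expect a small number of symmetric type assignments (coming from the nontrivial automorphisms $G_5$ may itself have) to be exceptions that have to be handled separately. In the almost asymmetric cases, Corollary \ref{lem-almost-asymmetric} gives $D(G)=\max_i |v^{\ast}_i|$; in a residually symmetric case, an automorphism interchanging two equal-sized classes instead forces $D(G)=\max_i |v^{\ast}_i|+1$, exactly as in the proof of Lemma \ref{G2} when two $(K)$-classes can be swapped.

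Next, set $n=|V(G)|=\sum_i |v^{\ast}_i|$ and impose $D(G)=n-\ell$ with $\ell\in\{2,3\}$. In the almost asymmetric case this reads $\sum_{i\neq i_0}|v^{\ast}_i|=\ell$, where $v^{\ast}_{i_0}$ realizes the maximum (with the obvious $-1$ shift in the symmetric subcases). I would then split on which vertex realizes the maximum and on the type of every vertex, using that a type-$(1)$ class contributes exactly $1$ to the sum while a type-$(K)$ or type-$(N)$ class contributes at least $2$. Comparing these contributions with the types forced by $G_5$, I expect the case $\ell=2$ to be empty (each subcase either demands that a $(K)$- or $(N)$-class have size $1$, or leaves the sizes inconsistent), proving $D(G)\neq n-2$; and for $\ell=3$ I expect a single size vector $(|v^{\ast}_1|,|v^{\ast}_2|,\ldots)$ to survive, from which $G=C_5'$ is reconstructed, while the converse $D(C_5')=|V(C_5')|-3$ is verified directly.

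The main obstacle will be the symmetric subcases: since $G_5$ has a nontrivial automorphism group, a symmetric choice of types need not rigidify $G$, and in each such case one must check that the corrected value of $D(G)$ is still incompatible with $n-2$ and creates no extra graph with $D(G)=n-3$. The second delicate point is making the enumeration over the type constraints of $G_5$ exhaustive, since an overlooked configuration could hide a further solution.
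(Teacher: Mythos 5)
Your overall strategy coincides with the paper's: label the five classes of $G_5$ by their roles (two degree-$3$ classes $v^{\ast}_1, v^{\ast}_2$, the degree-$2$ class $v^{\ast}_3$ adjacent to both, and the two adjacent degree-$2$ classes $v^{\ast}_4, v^{\ast}_5$), decide when an automorphism of $G$ can move vertices between classes, and then do arithmetic on the class sizes. The difficulty is that your write-up stops at the strategy: every decisive claim --- that $\ell=2$ is impossible and that $\ell=3$ forces $G=C_5^{'}$ --- is prefaced by ``I expect'', and the finite verification that constitutes the actual content of the lemma (which types \cite[Theorem 1]{Jannesari} forces on each class of $G_5$, in particular that the classes other than $v^{\ast}_3$ relevant to the symmetry are singletons in the critical cases, and the resulting size bookkeeping) is never carried out. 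The paper's proof is exactly this verification: if $|v^{\ast}_1|=|v^{\ast}_2|=|v^{\ast}_3|=1$ then $D(G)=2$, which gives $\ell=3$ and $G=C_5^{'}$; if $|v^{\ast}_1|=|v^{\ast}_2|=1\neq|v^{\ast}_3|$ then $D(G)=|v^{\ast}_3|=|V(G)|-4$; and otherwise $D(G)=\max\{|v^{\ast}_1|,|v^{\ast}_2|,|v^{\ast}_3|\}$, which is again too small. Without that enumeration your proposal establishes nothing beyond what Corollary \ref{lem-almost-asymmetric} already says.

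There is also a concrete inaccuracy in the one general principle you do commit to: the rule that a swap of two equal-sized classes ``forces $D(G)=\max_i|v^{\ast}_i|+1$'' is false for $G_5$. When $|v^{\ast}_1|=|v^{\ast}_2|=t\geq 2$, the reflection of $G_5$ exchanging $v^{\ast}_1$ with $v^{\ast}_2$ (and $v^{\ast}_4$ with $v^{\ast}_5$) is broken at no extra cost by assigning the two singleton degree-$2$ classes distinct colors, which is possible because $t\geq 2$ colors are already in play; hence $D(G)=\max\{|v^{\ast}_1|,|v^{\ast}_2|,|v^{\ast}_3|\}$ with no $+1$, exactly as the paper states. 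Whether a $+1$ occurs depends on whether the classes fixed by the symmetry supply enough color variety, not merely on the existence of equal-sized swappable classes; the $+1$ does occur in the all-singleton case, which is precisely why $D(C_5^{'})=2=|V(C_5^{'})|-3$. In this particular lemma your overestimate happens not to create spurious solutions, but since the whole argument hinges on exact values of $D(G)$, the rule as stated would have to be corrected, and the symmetric subcases analyzed explicitly, before the case analysis could be trusted.
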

\begin{proof}
Let 
$\ell \in \{2,3\}$ 
and 
$D(G)= |V(G)|-\ell$.
Let 
$v^{\ast}_1$ 
and 
$v^{\ast}_2$ 
be the degree-$3$ vertices, 
$v^{\ast}_3$ 
be the degree-$2$ vertex that is adjacent to 
$v^{\ast}_1$ 
and 
$v^{\ast}_2$, 
and 
$v^{\ast}_4$ 
and 
$v^{\ast}_5$ 
be the adjacent degree-$2$ vertices. Let 
$|v^{\ast}_1|=|v^{\ast}_2|=1$. 
If 
$|v^{\ast}_3|=1$, 
then 
$D(G)= 2$. 
This implies that 
$\ell = 3$ 
and 
$G=C_5^{'}$. 
If 
$|v^{\ast}_3|\neq 1$, 
then 
$D(G)= |v^{\ast}_3|=|V(G)|-4$, 
a contradiction. If 
$|v^{\ast}_1|=|v^{\ast}_2|\neq 1$ 
or 
$|v^{\ast}_1|\neq |v^{\ast}_2|$, 
then 
$D(G)=\max \{|v^{\ast}_1|, |v^{\ast}_2|, |v^{\ast}_3|\}$. 
Therefore, in all cases 
$D(G)\notin \{|V(G)|-2, |V(G)|-3\}$.
\end{proof}
\begin{lemma}\label{G6}
If $G^{\ast}$ is graph $G_6$ in \cite[Theorem 1]{Jannesari}, then
$D(G)\neq |V(G)|-2$; 
and 
$D(G)= |V(G)|-3$
if and only if 
$G=K_1 + P_4$.  
\end{lemma}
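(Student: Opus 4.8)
The plan, following the template of Lemmas~\ref{G1}--\ref{G5}, is to work with the five twin classes $v^{\ast}_1,\dots,v^{\ast}_5$ of $G^{\ast}=G_6$, using the description from \cite[Theorem 1]{Jannesari} (the underlying graph of $G_6$ is $K_1+P_4$, with $v^{\ast}_1$ its dominating vertex, $(v^{\ast}_2,v^{\ast}_3,v^{\ast}_4,v^{\ast}_5)$ the spanning path, and each $v^{\ast}_i$ carrying the type recorded there). I would fix $\ell\in\{2,3\}$, assume $D(G)=|V(G)|-\ell$, and aim to force $\ell=3$ together with $|v^{\ast}_1|=\cdots=|v^{\ast}_5|=1$, i.e.\ $G=K_1+P_4$.

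First I would determine $\mathrm{Aut}(G)$. The only non-trivial automorphism of $K_1+P_4$ is the reflection $\rho$ fixing $v^{\ast}_1$ and swapping $v^{\ast}_2\leftrightarrow v^{\ast}_5$, $v^{\ast}_3\leftrightarrow v^{\ast}_4$; it lifts to $G$ precisely when the classes it exchanges have equal size, and apart from such a lift $\mathrm{Aut}(G)$ is generated by the full symmetric groups acting inside the type-(K) and type-(N) classes. Consequently, when $\rho$ does not lift, $G^{\ast}$ is almost asymmetric, so Corollary~\ref{lem-almost-asymmetric} gives $D(G)=\max_i|v^{\ast}_i|$, and then $D(G)=|V(G)|-\ell$ says that the remaining four class sizes sum to $\ell\le 3$ — impossible, since each of them is at least $1$. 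Hence only configurations in which $\rho$ lifts can survive.

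When $\rho$ lifts, $D(G)\ge 2$, and I would show that $D(G)$ equals the maximum of $2$ and the sizes of the type-(K)/(N) classes (up to an additive $1$ which turns out not to matter for the count): every such class needs pairwise distinct colours, while once two colours are present a single free choice on an orbit moved by $\rho$ already destroys $\rho$. Substituting this into $D(G)=|V(G)|-\ell=\bigl(\sum_i|v^{\ast}_i|\bigr)-\ell$ and running the short case analysis on how many of the five class sizes exceed $1$ shows that with $\ell\in\{2,3\}$ the only possibility is all five sizes equal to $1$ and $\ell=3$. This simultaneously yields $D(G)\neq|V(G)|-2$ and shows $D(G)=|V(G)|-3$ implies $G=K_1+P_4$; the reverse implication is the direct check $\mathrm{Aut}(K_1+P_4)=\mathbb{Z}_2$ and $D(K_1+P_4)=2=|V(K_1+P_4)|-3$.

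The delicate part is the automorphism bookkeeping once $\rho$ lifts: one must be sure that breaking $\rho$ costs no colour beyond what the type-(K)/(N) classes already demand, and, dually, that those classes genuinely force $|v^{\ast}_i|$ distinct colours each; with that in hand, matching $D(G)$ against $|V(G)|-\ell$ is the same routine count already used in Lemmas~\ref{G1}--\ref{G5}.
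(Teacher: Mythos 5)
Your proposal is correct and follows essentially the same route as the paper: it uses that the only nontrivial automorphism of $G_6$ is the reflection swapping the two degree-2 and the two degree-3 classes, disposes of the non-symmetric size distributions via Corollary~\ref{lem-almost-asymmetric}, and in the symmetric case compares $D(G)$ (the maximum of $2$ and the class sizes, with the saturated corner case needing one extra colour handled by the same count) with $|V(G)|-\ell$ to force all classes trivial and $\ell=3$, i.e.\ $G=K_1+P_4$. The only small imprecision is that the reflection lifts to $G$ only when the swapped classes agree in both size \emph{and} type, but size-equal, type-mismatched configurations are again almost asymmetric and are excluded by the same counting, so nothing in your argument breaks.
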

\begin{proof}
Let 
$v^{\ast}_1$ 
and 
$v^{\ast}_4$ 
be the degree-$2$ vertices, 
$v^{\ast}_3$ 
and 
$v^{\ast}_5$ 
be the degree-$3$ vertices, and 
$v^{\ast}_2$ 
be the degree-$4$ vertex. 
Let 
$\ell \in \{2,3\}$ 
and 
$D(G)= |V(G)|-\ell$. 
The only non-trivial automorphism of $G^{\ast}$ is the automorphism that images  
$v^{\ast}_1$ 
to
$v^{\ast}_4$, 
$v^{\ast}_4$ 
to
$v^{\ast}_1$, 
$v^{\ast}_3$ 
to
$v^{\ast}_5$, 
$v^{\ast}_5$ 
to
$v^{\ast}_3$, 
and does not move 
$v^{\ast}_2$.
Since the non-adjacent vertices are not of type (K), if 
$|v^{\ast}_1|=|v^{\ast}_4|$, 
then 
$|v^{\ast}_1|=1$. 
Now, if 
$|v^{\ast}_3|=|v^{\ast}_5|=1$, 
then 
$D(G)=\max \{2, |v^{\ast}_2|\}$. 
If 
$D(G)=2$, 
then 
$\ell=3$, 
$|v^{\ast}_2|=1$ 
and so 
$G=K_1 + P_4$.
If
$D(G)=|v^{\ast}_2|$, 
then 
$|v^{\ast}_2|+4-\ell=|v^{\ast}_2|$, 
a contradiction. 
If 
$|v^{\ast}_3|=|v^{\ast}_5|\neq 1$, 
then 
$D(G)=\max \{|v^{\ast}_3|, |v^{\ast}_2|\}$. 
In both cases 
$D(G)= |v^{\ast}_3|$ 
and 
$D(G)=|v^{\ast}_2|$, 
$D(G)\notin \{|V(G)|-2, |V(G)|-3\}$. 
If 
$|v^{\ast}_3|\neq |v^{\ast}_5|$ 
or 
$|v^{\ast}_1| \neq |v^{\ast}_4|$, 
then 
$G$ 
is an almost asymmetric graph. Therefore, Corollary \ref{lem-almost-asymmetric} concludes that 
$D(G)\neq |V(G)|-\ell$.
\end{proof}
\begin{lemma} \label{Gal}
If $G^{\ast}$ is one of the graphs $G_7$, $G_8$, $G_9$ or $G_{10}$ in \cite[Theorem 1]{Jannesari}, 
then $D(G)\notin \{|V(G)|-2, |V(G)|-3\}$.
\end{lemma}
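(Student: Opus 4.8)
Since $G^{\ast}=G_i$ for some $i\in\{7,8,9,10\}$, \cite[Theorem~1]{Jannesari} gives $\dim(G)=|V(G)|-3$, so Theorem~\ref{main} yields $D(G)\le|V(G)|-2$; thus the content of the statement is that $D(G)$ in fact misses both values $|V(G)|-2$ and $|V(G)|-3$. The plan is to establish this for $G_7,G_8,G_9,G_{10}$ one at a time, in exactly the style of Lemmas~\ref{G1}--\ref{G6}. For a fixed $i$ I would first read off from \cite[Theorem~1]{Jannesari} the full description of $G^{\ast}=G_i$: its twin vertices $v^{\ast}_1,\dots,v^{\ast}_k$, the degree data singling each of them out, the types each twin class is allowed to take (using the usual restrictions: two non-adjacent twin vertices cannot both be of type (K), and various degree and adjacency conditions force a class to be of type (1)), and the automorphism group of $G^{\ast}$. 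The feature of these four graphs that makes the statement true is that they are the ``large'' twin graphs of \cite{Jannesari}: in every admissible configuration $G$ has enough twin classes below the largest one to keep $D(G)$ comfortably below $|V(G)|-3$.

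For each admissible assignment of types I would then compute $D(G)$. If $G^{\ast}$ is almost asymmetric, Corollary~\ref{lem-almost-asymmetric} gives $D(G)=\max_{j}|v^{\ast}_{j}|$. If it is not, a non-trivial automorphism of $G^{\ast}$ identifies one or more orbits of equal-size, equal-type twin classes; arguing exactly as in Lemmas~\ref{G5} and~\ref{G6}, distinguishing a swapped pair $\{v^{\ast}_a,v^{\ast}_b\}$ of common size $s$ costs $s+1$ colours, whence $D(G)=\max\bigl\{\,s+1,\ \max_{j\notin\{a,b\}}|v^{\ast}_j|\,\bigr\}$ (with the obvious variant for an orbit of length $>2$). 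In either case I would set $|V(G)|=\sum_{j}|v^{\ast}_j|$ and impose $D(G)=|V(G)|-\ell$ with $\ell\in\{2,3\}$. By Observation~\ref{obser} this equation forces the total size of the twin classes \emph{not} accounted for by $D(G)$ to be at most $3$; but in each branch the number of those classes — together with the fact that a class of type (K) or (N) has size at least $2$ — makes this total impossible, or else forces such a class to have size $1$ or a non-integral size. Hence $D(G)\le|V(G)|-4$, and in particular $D(G)\notin\{|V(G)|-2,|V(G)|-3\}$.

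The only genuine work, and the expected obstacle, is the case bookkeeping. For each of $G_7,\dots,G_{10}$ separately one must check that no permissible choice of twin types collapses the graph so far — via a long automorphism orbit, or by forcing almost every twin class to type (1) — that $|V(G)|-D(G)$ slips down to $2$ or $3$. The branches that need the most attention are those in which a twin class carries an unbounded parameter $t$: if that class realises the maximum, one verifies graph by graph that the complementary sum of twin-class sizes is a fixed constant which is at least $4$; if it does not, then $D(G)=\max_j|v^{\ast}_j|$ is itself unbounded while the complementary sum still contains the large class, so $|V(G)|-D(G)$ is again large. Once these finitely many configurations have been checked for each of the four twin graphs, the statement follows.
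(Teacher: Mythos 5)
Your overall strategy is exactly the one the paper uses -- compute $D(G)$ either via almost asymmetry (Corollary \ref{lem-almost-asymmetric}) or via a $\max\{s+1,\max_j|v^{\ast}_j|\}$ formula when some automorphism of $G^{\ast}$ survives, and then show that $D(G)=|V(G)|-\ell$ with $\ell\in\{2,3\}$ forces the complementary sum of twin-class sizes to be too small. But as written the proposal is a plan, not a proof: for none of $G_7,\dots,G_{10}$ do you actually identify the twin graph's structure, its admissible type assignments, or its automorphisms, and the key assertion -- that ``in every admissible configuration $G$ has enough twin classes below the largest one'' -- is precisely the content of the lemma, stated rather than verified. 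You yourself flag the case bookkeeping as ``the only genuine work,'' and that work is what is missing. The paper's proof, though terse, does make the case-specific claims that carry the argument: for $G_7$ the twin graph is almost asymmetric for every admissible typing; for $G_8$ and $G_9$ one has $D(G)=\max_{v^{\ast}}|v^{\ast}|$ and, crucially, these twin graphs never admit three classes of type (1), so the classes other than the largest contribute total size at least $4$; for $G_{10}$ one splits on whether the two adjacent degree-$3$ classes of type (K) have equal size, the unequal case being almost asymmetric and the equal case (where $D(G)=s+1$) forcing some class to have size zero. None of these structural facts appears, even implicitly, in your write-up, so the contradiction you invoke via Observation \ref{obser} cannot actually be derived from what you have on the page.

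A second, smaller point: your blanket heuristic that these are the ``large'' twin graphs and hence $D(G)\le|V(G)|-4$ ``comfortably'' is not quite how the boundary cases behave. In the equal-size branch of $G_{10}$ the distinguishing number is $s+1$, which exceeds every class size, so ruling out $|V(G)|-2$ and $|V(G)|-3$ there is an arithmetic check (it forces a class of size $0$), not a consequence of having many classes; and for $G_8,G_9$ the bound hinges on the specific fact that at most two classes can be of type (1). So the route is the paper's route, but to count as a proof you would need to carry out, for each of the four graphs, exactly the finite verification you defer.
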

\newpage
\begin{proof}
In (a), $G^{\ast}$ is an almost asymmetric graph. In Corollary \ref{lem-almost-asymmetric}, let 
$m \in \{2, 3\}$.
Hence, $D(G) \neq |V(G)|-m$. 
In (b) and (c), $D(G)= \max\limits_{v^{\ast} \in V(G^{\ast})} \{|v^{\ast}| \; | \; v^{\ast} \in G^{\ast} \}$. 
Since there are no three vertices of type (1), then $D(G)\notin \{|V(G)|-2, |V(G)|-3\}$. 
In (d), if the two degree-$3$ adjacent vertices of type (K) have different sizes, then $G^{\ast}$ is an almost asymmetric graph and 
the result is immediate by Corollary \ref{lem-almost-asymmetric}. 
If the two degree-$3$ adjacent vertices of type (K) have the same size, then 
$D(G) \in \{|V(G)|-2, |V(G)|-3\}$ 
concludes that some vertices of 
$G^{\ast}$ 
have the size zero. It is impossible and the proof is completed.
\end{proof}

Since graphs are not presented in twin form in the final characterization, we will provide a definition to facilitate their display. To do so, let us recall the definition of the blow-up of a graph. Let $G$ be a  graph of order $n$  with vertices $v_1, v_2, \ldots, v_n$, and assume that  $H_1, H_2, \ldots, H_n$ are complete   or empty  graphs. The {\em blow-up} of $G$, denoted  by $G[H_1, H_2,  \ldots, H_n]$, is the graph obtained  as follows:
\begin{itemize}
\item 
Every vertex $v_i$ of $G$ is replaced by $H_i$ for every $i$ with $1\leq i\leq n$.
\item 
For any two vertices $v_i$ and $v_j$ if $v_iv_j\in E(G)$, then for every $u\in V(H_i)$ and every $v\in V(H_j)$, $uv$ is an edge of $G[H_1, H_2, \ldots, H_n]$.
\end{itemize}
Note that for any two vertices $v_i$ and $v_j$ with  $v_iv_j\in E(G)$, if $H_i$ and $H_j$ are empty graphs, then the subgraph induced by $H_i \cup H_j$ in $G[H_1, \ldots, H_n]$ is the complete bipartite graph with parts $H_i$ and $H_j$. 
Specifically, in the path $(v_1, v_2, \ldots, v_n)$, $P_n [H_1, \ldots, H_n]=(H_1, H_2, \ldots, H_n)$. 

In the next Lemma all graphs $G$ of order $n$ with ${\rm diam(G)}\in \{3, 4\}$, ${\rm dim(G)}=n-{\rm diam(G)}$ and $D(G) \in \{n-2,n-3\}$, according to \cite[Theorem 2.14]{Hernando}, are characterized.
\begin{lemma}\label{Hernando-Lem}
Let 
$G$ 
be a connected graph of order 
$n$ 
and diameter 
$d \in \{3, 4\}$. 
If 
${\rm dim(G)}=n-d$,
then $D(G)=n-2$ if and only if $G=P_4$; and $D(G)=n-3$ if and only if $G$ is one of the following: 
\begin{itemize}
\begin{small}
\begin{multicols}{2}
\item[(i)] 
$P_5$
\item[(iii)] 
$P_4 [K_1, K_t, K_1, K_1]$, $t\geq 2$ 
\item[(v)] 
$P_4 [K_1, \overline{K_t}, K_1, K_1]$, $t\geq 2$
\item[(ii)] 
$P_4 [K_t, K_1, K_1, K_1]$, $t\geq 2$
\item[(iv)] 
$P_4 [\overline{K_t}, K_1, K_1, K_1]$, $t\geq 2$
\end{multicols}
\end{small}
\end{itemize}
\end{lemma}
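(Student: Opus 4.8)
The plan is to read off from \cite[Theorem 2.14]{Hernando} the explicit list of connected graphs of order $n$ with ${\rm diam}(G)=d$ and ${\rm dim}(G)=n-d$, specialised to $d\in\{3,4\}$, and then to compute $D(G)$ for each of them using the twin‑graph tools of Observation~\ref{obser} and Corollary~\ref{lem-almost-asymmetric}, with Theorem~\ref{main} as an a priori cap. Since ${\rm dim}(G)=n-d$, Theorem~\ref{main} gives $D(G)\le n-d+1$; in particular $D(G)\le n-3$ when $d=4$, so the assertion ``$D(G)=n-2$ iff $G=P_4$'' has content only for $d=3$. Each graph in the relevant part of \cite[Theorem 2.14]{Hernando} is (up to isomorphism) a blow‑up $P_{d+1}[H_1,\dots,H_{d+1}]$ of a path with every $H_i$ complete or empty; equivalently, $G^{\ast}$ is $P_{d+1}$ carrying a type ($(1)$, $(K)$ or $(N)$) and a size at each vertex.

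For the main computation the key point is that for $d\in\{3,4\}$ the base path $P_{d+1}$ has automorphism group $\mathbb{Z}_2$, generated by the reversal, so any non‑trivial automorphism of $G$ that sends one twin class onto another must be induced by this reversal, and it survives only when the pattern of types and sizes along the path is palindromic. When the pattern is not palindromic, $G^{\ast}$ is almost asymmetric, so the remark preceding Corollary~\ref{lem-almost-asymmetric} gives $D(G)=\max_{v^{\ast}\in V(G^{\ast})}|v^{\ast}|$; writing the largest class size as $t$, the remaining classes are singletons (any second non‑trivial class would either force a non‑palindromic mismatch already handled, or would drop ${\rm dim}$ below $n-d$, cf.\ Observation~\ref{obser}), so $n=t+d$ and hence $D(G)=n-d$. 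For $d=3$ this produces exactly the four single‑blow‑up families $P_4[\,\cdot\,]$ in (ii)--(v) (clique versus empty graph, blown up at an end versus at an interior vertex of $P_4$, the remaining positions being redundant under the reversal); for $d=4$ it produces only graphs with $D(G)=n-4\neq n-3$. When the pattern \emph{is} palindromic (both middle vertices of $P_4$ blown up to matching classes, or both ends, or the centre of $P_5$, and so on), the reversal does lift, but colouring each blown‑up class with all its colours once (rainbow) and giving the two ``poles'' swapped by the reversal distinct colours destroys every automorphism, so $D(G)=\max\{2,t\}$; since $n\ge 2t+2$ in every such case, Observation~\ref{obser} shows $D(G)$ can never equal $n-2$ or $n-3$.

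It remains to treat the two base paths. For $G=P_4$ the only non‑trivial automorphism is the reversal, which one colour cannot break but two can, so $D(P_4)=2=n-2$; in exactly the same way $D(P_5)=2=n-3$. Assembling all cases: among the graphs of \cite[Theorem 2.14]{Hernando} with $d\in\{3,4\}$, the unique one with $D(G)=n-2$ is $P_4$, and the only ones with $D(G)=n-3$ are $P_5$ (family (i)) and the families (ii)--(v), which is precisely the stated characterisation. The main obstacle is the bookkeeping: one must pass through \emph{every} family of \cite[Theorem 2.14]{Hernando} for $d=3$ and $d=4$ — including the ``thick'' ones where several vertices of the path are blown up while ${\rm dim}(G)$ still equals $n-d$ — and, for each palindromic pattern of types and sizes, determine $D(G)$ exactly rather than merely bound it below by $\max_{v^{\ast}}|v^{\ast}|$; that the base paths have automorphism group only $\mathbb{Z}_2$ is exactly what keeps this finite case analysis short.
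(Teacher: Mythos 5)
Your overall strategy --- read the extremal families off \cite[Theorem 2.14]{Hernando} and evaluate $D(G)$ on each one through its twin graph, with Theorem~\ref{main}, Observation~\ref{obser} and Corollary~\ref{lem-almost-asymmetric} doing the bookkeeping --- is the same as the paper's, but there is a genuine gap at your very first step. You assert that every connected graph with ${\rm diam}(G)=d\in\{3,4\}$ and ${\rm dim}(G)=n-d$ is a blow-up $P_{d+1}[H_1,\dots,H_{d+1}]$, i.e.\ that $G^{\ast}=P_{d+1}$. That is not what the cited characterization says: besides the case in which $G^{\ast}$ is the diametral path (the paper's case 1(a), together with the further sub-cases 1(b)--1(d) on the admissible types along the path), \cite[Theorem 2.14]{Hernando} contains additional families --- the cases the paper labels (2) and (3) --- in which $G^{\ast}$ properly contains the diametral path, so $G$ is not a blow-up of $P_{d+1}$ at all. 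These families are not vacuous under the hypotheses of the lemma for $d=3$: the bull graph (a triangle with a pendant vertex attached at each of two of its vertices) has order $5$, diameter $3$, metric dimension $2=n-3$, and it has no twins, so its twin graph is the bull itself and not $P_4$. Your $\mathbb{Z}_2$-reversal analysis of patterns along $P_{d+1}$ never sees such graphs, so your case analysis is incomplete; note moreover that the bull is \emph{not} almost asymmetric (it admits the reflection exchanging the two pendant vertices), so Corollary~\ref{lem-almost-asymmetric} does not dispose of it automatically, and a direct computation gives $D=2=n-3$ for it --- these omitted families therefore require genuine scrutiny, not a blanket reduction to path blow-ups. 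The paper's proof, whatever one thinks of its brevity, does have a counterpart to this step: after settling case 1(a) (where $\alpha(G^{\ast})\in\{0,1\}$ yields $P_4$, $P_5$ and the four single-class blow-ups (ii)--(v)), it separately addresses the remaining cases 1(b), 1(c), 1(d), (2) and (3). Your write-up omits that half entirely because of the incorrect description of the cited theorem.

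Within the path case your argument essentially coincides with the paper's (non-palindromic type/size patterns via almost asymmetry, palindromic ones via a rainbow colouring plus Observation~\ref{obser}, and the base paths $P_4$, $P_5$ by hand), and that part is fine, although you should also verify, before processing a palindromic pattern with a non-trivial class, that such a graph actually satisfies ${\rm dim}(G)=n-d$ and hence occurs in \cite[Theorem 2.14]{Hernando} at all --- most of them do not, which is precisely why the path case of that theorem carries restrictions on the number and placement of classes of type (K) or (N) rather than allowing arbitrary blow-ups.
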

\begin{proof} 
The graph 
$G^{\ast}$ 
is one of the graphs described in \cite[Theorem 2.14]{Hernando}. 
In 1(a), assume first that 
$d=3$.
If 
$\alpha(G^{\ast})=0$,  
then 
$G=P_4$ 
and
$D(G)=n-2$. 
In what follows, we will see that $G=P_4$ is the only graph with $D(G)=n-2$ satisfying the assumptions of this lemma.
If 
$\alpha(G^{\ast})=1$,  
then there are two non-isomorphic positions for placing a vertex of type (K) or (N) in $P_4$, 
which in all cases, the distinguishing number is equal to $n-3$. 
So, we have obtained graphs in (ii), (iii), (iv) and (v). Let 
$d=4$. 
If 
$\alpha(G^{\ast})=0$,  
then 
$G=P_5$ 
and
$D(G)=n-3$. 
It is the graph in (i). 
If 
$\alpha(G^{\ast})=1$, 
then it is easy to check that in all cases 
$D(G)=n-4$. 
In the rest of the cases 1(b), 1(c), 1(d), (2) and (3), there are no graphs with distinguishing numbers $n-2$ or $n-3$, and its proof is straightforward using 
Corollary \ref{lem-almost-asymmetric} (for the graph $G^{\ast}$ with $|V(G^{\ast})|\geq 5$) or techniques similar to the previous lemmas 
(for the graph $G^{\ast}$ with $|V(G^{\ast})|=4$).
\end{proof}
\newpage
\begin{theorem}
Let 
$G$ 
be a graph of order 
$n\geq 4$. 
Then 
$D(G)=n-2$ 
if and only if 
$G$ 
is one of the following: 
\begin{itemize}
\begin{small}
\begin{multicols}{2}
\item[(1)] 
$C_5$ 
\item[(3)] 
$K_{1,2,2}$  
\item[(5)] 
$K_{3,3}$
\item[(7)] 
$K_{t,2}$, $t\geq 3$ 
\item[(9)] 
$K_{2}+ \overline{K_t}, t\geq 2$  
\item[(11)] 
$K_t + \overline{K_2}$, $t\geq 2$ 
\item[(13)] 
$K_1 + (K_t \cup K_1)$, $t\geq 2$
\item[(2)] 
$P_4$
\item[(4)] 
$2K_2 \cup K_1$ 
\item[(6)] 
$2K_3$ 
\item[(8)] 
$K_{t}\cup K_2$, $t\geq 3$ 
\item[(10)] 
$K_t \cup 2K_1$, $t\geq 2$ 
\item[(12)] 
$\overline{K_t}\cup K_2$, $t\geq 2$  
\item[(14)] 
$K_{t, 1}\cup K_1$, $t\geq 2$ 
\end{multicols} 
\end{small}
\end{itemize}
\end{theorem}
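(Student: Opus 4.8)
The plan is to argue exactly as in the proof of Theorem~\ref{then-2}: handle connected and disconnected graphs separately, using Theorem~\ref{main} for the former and Lemma~\ref{discon} for the latter.

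First suppose $G$ is connected of order $n\geq 4$ with $D(G)=n-2$. By Theorem~\ref{main} we have ${\rm dim}(G)\geq D(G)-1=n-3$, and together with the trivial bound ${\rm dim}(G)\leq n-1$ this leaves ${\rm dim}(G)\in\{n-1,n-2,n-3\}$, which I would treat in turn. If ${\rm dim}(G)=n-1$, then $G=K_n$ by \cite[Theorem~3]{0} and $D(K_n)=n\neq n-2$, so this value contributes nothing. If ${\rm dim}(G)=n-2$, I apply Lemma~\ref{lemn-2} with $\ell=2$; the six families there specialise to $K_{3,3}$, $K_{t,2}$ $(t\geq 3)$, $K_2+\overline{K_t}$ $(t\geq 2)$, $K_t+\overline{K_2}$ $(t\geq 2)$ and $K_1+(K_t\cup K_1)$ $(t\geq 2)$ — with families (d) and (f) of Lemma~\ref{lemn-2} collapsing to the same graph when $\ell=2$ — which are precisely items (5), (7), (9), (11), (13).

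The substantive subcase is ${\rm dim}(G)=n-3$. Yushmanov's bound ${\rm dim}(G)\leq n-{\rm diam}(G)$ \cite{yush} forces ${\rm diam}(G)\leq 3$. If ${\rm diam}(G)=3$, then ${\rm dim}(G)=n-{\rm diam}(G)$, so Lemma~\ref{Hernando-Lem} with $d=3$ gives $D(G)=n-2$ only for $G=P_4$, item (2). If ${\rm diam}(G)=2$, then $G^{\ast}$ is one of the graphs $G_1,\dots,G_{10}$ of \cite[Theorem~1]{Jannesari}; Lemmas~\ref{G1}--\ref{Gal} cover these, and a direct check of the remaining twin-free graph $C_5$ shows $D(C_5)=3=n-2$. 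Among the blow-ups treated in those lemmas the only ones with $D(G)=n-2$ are $K_{1,1,t}$ and $K_{1,2,2}$ from Lemma~\ref{G1}; since $K_{1,1,t}=K_2+\overline{K_t}$ already appears above, the new graphs are $C_5$ and $K_{1,2,2}$, items (1) and (3). Thus the connected graphs with $D(G)=n-2$ are exactly items (1), (2), (3), (5), (7), (9), (11), (13).

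For the disconnected case I would invoke Lemma~\ref{discon} with $t=n-2$: the disconnected graphs with distinguishing number $n-2$ are precisely the complements of the connected ones found above that happen to be disconnected. Here $\overline{C_5}=C_5$ and $\overline{P_4}=P_4$ are connected and drop out, while $\overline{K_{1,2,2}}=2K_2\cup K_1$, $\overline{K_{3,3}}=2K_3$, $\overline{K_{t,2}}=K_t\cup K_2$, $\overline{K_2+\overline{K_t}}=K_t\cup 2K_1$, $\overline{K_t+\overline{K_2}}=\overline{K_t}\cup K_2$ and $\overline{K_1+(K_t\cup K_1)}=K_{t,1}\cup K_1$ are disconnected in the stated ranges; these are items (4), (6), (8), (10), (12), (14). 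Adding the remark that no graph of order less than $4$ satisfies $D(G)=n-2$ (since $D(G)=1$ would mean an asymmetric graph on at most three vertices) closes the list. The one thing needing care is the bookkeeping: keeping the three ${\rm dim}$-subcases straight, noticing when different parametrised descriptions denote the same graph (e.g.\ $K_{1,1,t}$ versus $K_2+\overline{K_t}$, or (d) versus (f) of Lemma~\ref{lemn-2}), and computing each complement correctly — there is no genuinely hard step, but it is easy to omit or double-count a family.
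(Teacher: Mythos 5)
Your proposal is correct and takes essentially the same route as the paper's proof: split into connected and disconnected cases, use Theorem \ref{main} to restrict ${\rm dim}(G)\in\{n-1,n-2,n-3\}$, rule out $n-1$ via $K_n$, apply Lemma \ref{lemn-2} with $\ell=2$, handle ${\rm dim}(G)=n-3$ through the diameter-$2$ and diameter-$3$ characterizations (Lemmas \ref{G1}--\ref{Gal} and \ref{Hernando-Lem}), and finish the disconnected case with Lemma \ref{discon} and complements. Your explicit remarks on the $C_5$ case and on the coincidence of families (d) and (f) of Lemma \ref{lemn-2} when $\ell=2$ merely make the paper's bookkeeping more transparent.
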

\begin{proof}
Let 
$G$ 
be a connected graph with $D(G)=n-2$. Theorem \ref{main}, \cite[Theorem 3]{0} and Remark \ref{r1} imply that it suffices to check the graphs with metric dimension 
$n-2$ 
and 
$n-3$. 
So, assume first that 
${\rm dim}(G)=n-2$. 
According to Lemma \ref{lemn-2}, let
$\ell=2$. 
The graphs in (a), (b), (c), (d) and (e) in Lemma \ref{lemn-2} are the graphs in (5), (7), (9), (11) and (13), respectively.
Note that the graph in (f) in Lemma \ref{lemn-2} is appeared in (11).
Now, let 
${\rm dim}(G)=n-3$. 
Since 
${\rm dim}(G) \leq n-{\rm diam}(G)$, 
${\rm diam}(G)\leq 3$. 
If
${\rm diam}(G)=2$, 
then the graphs in (1), (3) and (9) follow directly from \cite[Theorem 1]{Jannesari} and Lemmas \ref{G1}, \ref{G2}, \ref{G3}, \ref{G5}, \ref{G6}, \ref{Gal}.
If
${\rm diam}(G)=3$, 
then \cite[Theorem 2.14]{Hernando} and Lemma \ref{Hernando-Lem} show that $G$ is the graph in (2). 
If
$G$ 
is a disconnected graph, then by Lemma \ref{discon}, $G$ has the form described in (4), (6), $\ldots ,$ (14) as the complements of graphs in (3), (5), $\ldots ,$ (13), respectively. 
(Note that $C_5$ and $P_4$ are the self-complementary graphs).
\end{proof}

\section{Graphs $G$ with $D(G)=|V(G)|-3$} 

In \cite{yush}, Yushmanov proved that ${\rm dim}(G) \leq n- {\rm diam}(G)$ for any connected graph $G$ of order $n\geq 2$. 
So, if ${\rm dim}(G)=n-4$, then ${\rm diam}(G) \in \{2, 3, 4\}$. 
Graphs with 
${\rm dim}(G)=n-4$ and ${\rm diam}(G)=4$ are characterised in \cite[Theorem 2.14]{Hernando}. 
Recently, a study was done in graphs with the property that  ${\rm dim}(G)=n-4$ and ${\rm diam}(G) \in \{2, 3\}$ in \cite{dim n-4}. 
For the graphs with ${\rm dim}(G)=n-4$, they have proven that 
$4\leq |V(G_{c}^{\ast})| \leq 9$ when 
${\rm diam}(G)=2$, 
and 
$4\leq |V(G_{c}^{\ast})| \leq 7$ when 
${\rm diam}(G)=3$. 
Moreover, they characterised all graphs with ${\rm dim}(G)=n-4$, ${\rm diam}(G) \in \{2, 3\}$ and $|V(G_{c}^{\ast})|=4$. 
Focusing on these results we will obtain a family of graphs $G$ with $D(G)=n-3$.

In the next two lemmas, we will check the graphs in \cite[Theorem 4.1]{dim n-4} whose distinguishing number equals $n-3$.
\begin{lemma}\label{dim n-4 lem1}
If 
$G^{\ast}$ 
is one of graphs (g.1), (g.2), (g.3), (g.4) or (g.5) in \cite[Theorem 4.1]{dim n-4},
then $D(G)=|V(G)|-3$ if and only if $G$ is one of the following: 
\begin{itemize}
\begin{small}
\begin{multicols}{2}
\item[(i)] 
$K_1 + (K_1 \cup 2K_{2})$
\item[(ii)] 
$K_1 + (K_1 \cup K_{2, 2})$
\end{multicols}
\end{small}
\end{itemize}
\end{lemma}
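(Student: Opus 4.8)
The plan is to proceed by the same twin-graph case analysis used throughout this section. First I would look up the five graphs $(g.1)$ through $(g.5)$ in \cite[Theorem 4.1]{dim n-4}; these are the twin graphs $G^{\ast}$ of order $|V(G^{\ast})|=4$ arising from the characterization of connected graphs with ${\rm dim}(G)=n-4$ and ${\rm diam}(G)\in\{2,3\}$. For each such $G^{\ast}$, I would fix a labelling $v^{\ast}_1, v^{\ast}_2, v^{\ast}_3, v^{\ast}_4$ of its four vertices according to their degrees (and the prescribed type constraints (1), (K), (N), (1K), etc.\ coming from the diameter/dimension hypotheses), exactly as in Lemmas \ref{G2}, \ref{G3}, \ref{G5}, \ref{G6}. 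The key point, to be established first in each case, is whether $G^{\ast}$ is almost asymmetric or admits one nontrivial automorphism; if it is almost asymmetric, then by the remark preceding Corollary \ref{lem-almost-asymmetric} we have $D(G)=\max_{v^{\ast}}|v^{\ast}|$, and Corollary \ref{lem-almost-asymmetric} with $m=3$ lets me discard the case unless $n-\max_{v^{\ast}}|v^{\ast}|=3$.

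Next, for each $G^{\ast}$ I would write down $D(G)$ as a maximum over the $|v^{\ast}_i|$, adjusting by $+1$ on any twin-class that is of type (K) or (N) and that is swapped with no other class (so its internal symmetry must be broken), and by a further $+1$ wherever two equal-size classes are interchanged by the automorphism. Then I impose $D(G)=n-3$, i.e. $\sum_{i}|v^{\ast}_i| - D(G) = 3$, and solve the resulting small Diophantine system over the positive integers subject to the type restrictions (recalling that a type-(K) or type-(N) class has size $\geq 2$, a type-(1) class has size $1$, and adjacency/non-adjacency forces which internal structure is allowed). In most subcases this forces some $|v^{\ast}_i|$ to equal $1$, $0$, or a non-integer such as $3/2$ — contradicting the type constraints — and the case dies, precisely as in Lemmas \ref{G2}--\ref{G6}. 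The surviving solutions should pin down all the $|v^{\ast}_i|$ up to one free parameter $t\geq 2$, and reading off the corresponding blow-up $G^{\ast}[H_1,H_2,H_3,H_4]$ should yield exactly the two graphs $K_1+(K_1\cup 2K_2)$ and $K_1+(K_1\cup K_{2,2})$; I would double-check by computing $D$ directly for these two graphs and confirming $|V(G)|=7$, $D(G)=4$ in the first case and $|V(G)|=9$, $D(G)=6$ in the second.

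Finally I would verify the converse direction simultaneously: for $G=K_1+(K_1\cup 2K_2)$ and $G=K_1+(K_1\cup K_{2,2})$, identify which of $(g.1)$--$(g.5)$ is the twin graph $G^{\ast}$, check that the diameter and dimension hypotheses of \cite[Theorem 4.1]{dim n-4} are met, and confirm the equality $D(G)=n-3$ holds by an explicit optimal distinguishing colouring together with a lower-bound argument (any colouring with fewer colours must repeat a colour inside one twin-class of size $\geq 3$, or inside a pair of symmetric classes, leaving a colour-preserving nontrivial automorphism). The main obstacle I anticipate is bookkeeping rather than conceptual: correctly enumerating the non-isomorphic type-assignments to the four vertices of each $G^{\ast}$ consistent with the external data (which vertices are forced to type (1), which pairs are exchanged by an automorphism), since an overlooked assignment could hide an extra graph. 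A secondary subtlety is handling the automorphisms of $G$ that are \emph{not} induced by automorphisms of $G^{\ast}$ — i.e. symmetries living entirely inside a single twin-class or permuting equal blow-up blocks across an edge — which is exactly what the $+1$ corrections above are meant to capture, so I would be careful to justify each such correction (e.g.\ why non-adjacent equal-size (1)-classes behave like type (N) in the count, cf.\ Lemma \ref{G6}).
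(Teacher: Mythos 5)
Your overall strategy is exactly the paper's: run through the twin graphs (g.1)--(g.5) of \cite[Theorem 4.1]{dim n-4}, eliminate (g.2)--(g.4) via Corollary \ref{lem-almost-asymmetric}, express $D(G)$ through the class sizes $|v^{\ast}_i|$ in the remaining cases (g.1) and (g.5), and solve $D(G)=n-3$ as a small integer constraint. However, as written your plan contains concrete errors that would make the executed proof wrong. First, your rule for computing $D(G)$ is off: a type (K) or (N) class that is \emph{not} interchanged with any other class needs only $|v^{\ast}|$ colors (coloring its vertices with pairwise distinct colors already kills all internal symmetry), so no $+1$ is warranted there; the $+1$ appears only when equal-size classes of the same type are swapped by an automorphism (and for three or more mutually swappable classes one uses the count $\binom{c}{r}\geq k$, as in Lemma \ref{dim n-4 lem2}). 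Applied literally, your rule inflates $D$ by one in the unswapped cases and would distort the Diophantine analysis, potentially admitting spurious graphs or discarding genuine ones.

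Second, your verification of the converse direction is not correct: $K_1+(K_1\cup 2K_{2})$ and $K_1+(K_1\cup K_{2,2})$ both have $6$ vertices and distinguishing number $3=n-3$ (the constraint forces $|v^{\ast}_1|=|v^{\ast}_2|=2$ and $|v^{\ast}_3|=|v^{\ast}_4|=1$ in (g.1) and in (g.5)), not $(|V(G)|,D)=(7,4)$ and $(9,6)$ as you claim. Relatedly, no free parameter $t\geq 2$ survives the constraint $D(G)=n-3$ here: the class sizes are pinned exactly, which is precisely why the lemma lists two single graphs rather than parametric families. These are not mere typos; they show the claimed ``double-check'' was not actually carried out, and together with the faulty $+1$ rule they leave the case analysis unreliable. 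Correct the rule for $D(G)$ (compare the computations in Lemmas \ref{G1} and \ref{G6} and the use of Observation \ref{obser}) and redo the final verification; with those repairs your plan coincides with the paper's proof.
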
 
\begin{proof} 
In (g.1), let 
$v^{\ast}_1$ 
and 
$v^{\ast}_2$ 
be the pendant vertices of type (K), 
$v^{\ast}_3$
be the pendant vertex of type (1NK) and 
$v^{\ast}_4$ 
be the degree-$3$ vertex of 
$G^{\ast}$.
If 
$v^{\ast}_3$ 
is of type (K) and 
$|v^{\ast}_1|=|v^{\ast}_2|=|v^{\ast}_3|$, 
then 
$D(G)= \max \{|v^{\ast}_1|+1, |v^{\ast}_4|\}$. 
Thus 
$D(G)=|V(G)|-\ell$ 
implies that 
$\ell \neq 3$. 
If 
$v^{\ast}_3$ 
is of type (1N) and 
$|v^{\ast}_1|=|v^{\ast}_2|$, 
then 
$D(G)= \max \{|v^{\ast}_1|+1, |v^{\ast}_3|, |v^{\ast}_4|\}$. 
If 
$D(G)= |v^{\ast}_1|+1$, 
then 
$D(G)=|v^{\ast}_1|+ |v^{\ast}_2|+|v^{\ast}_3|+|v^{\ast}_4|- 3$ 
concludes that 
$|v^{\ast}_1|=|v^{\ast}_2|=2$ 
and 
$|v^{\ast}_3|=|v^{\ast}_4|=1$. 
Therefore the result in (i) is obtained. 
By Observation \ref{obser}, 
$D(G)\neq |V(G)|-3$ 
in all other cases in (g.1). 
Also, in (g.2), (g.3) and (g.4), Corollary \ref{lem-almost-asymmetric} concludes that 
$D(G)\neq |V(G)|-3$. 
In (g.5), let 
$v^{\ast}_1$ 
and 
$v^{\ast}_2$ 
be the degree-$3$ vertices of type (N), 
$v^{\ast}_3$
be the degree-$3$ vertex and 
$v^{\ast}_4$ 
be the pendant vertex of 
$G^{\ast}$. 
If 
$|v^{\ast}_1|=|v^{\ast}_2|$, 
then 
$D(G)= \max \{|v^{\ast}_1|+1, |v^{\ast}_3|, |v^{\ast}_4|\}$. 
If 
$D(G)= |v^{\ast}_1|+1$, 
then 
$D(G)= |V(G)|-3$ 
if and only if 
$|v^{\ast}_2|=2$,
$|v^{\ast}_3|=|v^{\ast}_4|=1$ 
and so 
$G=K_1 + (K_1 \cup K_{2, 2})$. 
This graph is appeared in (ii). 
In all other cases, one can check that Corollary \ref{lem-almost-asymmetric} concludes the results.
\end{proof}
\begin{lemma}\label{dim n-4 lem2}
If 
$G^{\ast}$ 
is one of graphs (g.6), (g.7), (g.8) or (g.9) in \cite[Theorem 4.1]{dim n-4},
then 
$D(G)\neq |V(G)|-3$.
\end{lemma}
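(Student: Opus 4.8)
The plan is to parallel the structure of Lemmas~\ref{G2}, \ref{G3}, \ref{G5}, \ref{G6}, \ref{Gal} and, most closely, Lemma~\ref{dim n-4 lem1}: work through each of the four twin graphs $G_7^{\ast}=(\textrm{g.}6)$, $(\textrm{g.}7)$, $(\textrm{g.}8)$, $(\textrm{g.}9)$ from \cite[Theorem 4.1]{dim n-4} in turn, and in each case show that the arithmetic forced by $D(G)=|V(G)|-3$ is inconsistent with the type constraints on the blow-up classes $v^{\ast}_i$. For a graph $G^{\ast}$ on $k$ vertices, if $G^{\ast}$ is almost asymmetric then $D(G)=\max_i |v^{\ast}_i|$, so $D(G)=n-3$ demands that the remaining $k-1$ classes have sizes summing to exactly $3$; since every type-(K) or type-(N) class has size $\geq 2$ and every type-(1) class has size $1$, this is impossible once $G^{\ast}$ has at least four classes that are \emph{forced} to be of type (1K) or (1N) with at least one of them necessarily of size~$1$ but too many such classes to make the sum $3$ — this is exactly the mechanism already used in Lemma~\ref{Gal} and in the ``all other cases'' sentences of Lemma~\ref{dim n-4 lem1}, and Corollary~\ref{lem-almost-asymmetric} packages it.

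Concretely, first I would recall from \cite[Theorem 4.1]{dim n-4} the explicit descriptions of $(\textrm{g.}6)$–$(\textrm{g.}9)$: each has five classes (matching $|V(G^{\ast})|=5$, which is the relevant size for the $\dim(G)=n-4$, $|V(G_c^{\ast})|\le 9$ regime), together with their degree sequence and the type restrictions (which classes are forced to be of type (1), which may be (K)/(N), and which pairs of non-adjacent classes cannot be of type (K)). For each graph I split into subcases according to whether the potentially-large classes are of type (1), (K), or (N), and whether equal-size symmetric classes (interchanged by the unique non-trivial automorphism of $G^{\ast}$, when one exists) coincide in size. In the subcases where $G^{\ast}$ becomes almost asymmetric, Corollary~\ref{lem-almost-asymmetric} with $m=3$ gives $D(G)\neq n-3$ immediately because the sum of the non-maximal class sizes cannot equal $3$: there are at least four classes of size $\geq 1$ with at least two of size $1$ forced and the remaining one or two forced to be too small, or conversely two classes forced to size $\geq 2$ pushing the deficit above $3$. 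In the subcases with a genuine automorphism identifying two equal-size classes $v^{\ast}_a,v^{\ast}_b$, I compute $D(G)=\max\{|v^{\ast}_a|+1,\text{other sizes}\}$ and check, exactly as in Lemma~\ref{G6}, that setting this equal to $n-3$ forces some $|v^{\ast}_i|\in\{0,1,3/2\}$ in violation of a type constraint.

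The main obstacle is purely bookkeeping rather than conceptual: there are four graphs, each with several type-assignment subcases and a possible nontrivial automorphism, so the risk is missing a subcase in which the arithmetic \emph{does} balance — in particular a subcase where $G^{\ast}$ has a large automorphism-identified class of size $t$ and the rest sum to $3$ in a way compatible with the non-(K) constraint on non-adjacent vertices. I would therefore organize the proof graph-by-graph with an explicit enumeration of which classes can exceed size~$1$, show for each that either (i) the deficit $n-D(G)$ is forced to differ from $3$ by the size lower bounds, or (ii) $G^{\ast}$ is almost asymmetric and Corollary~\ref{lem-almost-asymmetric} applies, or (iii) a symmetry-breaking ``$+1$'' term makes $D(G)=n-3$ force an impossible fractional or zero class size. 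Since no admissible assignment survives all three filters in any of $(\textrm{g.}6)$–$(\textrm{g.}9)$, we conclude $D(G)\neq|V(G)|-3$ in every case, completing the proof.
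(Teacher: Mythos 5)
There is a genuine gap, on two levels. First, a factual one: the graphs (g.6)--(g.9) of \cite[Theorem 4.1]{dim n-4} are twin graphs on \emph{four} vertices, not five --- that theorem covers exactly the case ${\rm dim}(G)=n-4$, ${\rm diam}(G)=2$, $|V(G^{\ast})|=4$, which is the only case this lemma must handle (the regime $5\leq |V(G_c^{\ast})|\leq 9$ is explicitly \emph{excluded} from the final theorem). So your bookkeeping is set up on the wrong objects; in (g.6), for instance, two classes are forced to be of type (K) and a third of type (NK), which already changes which size assignments are admissible.

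Second, and more importantly, your mechanism --- ``either $G^{\ast}$ is almost asymmetric, so $D(G)=\max_i|v^{\ast}_i|$ and Corollary~\ref{lem-almost-asymmetric} applies, or a unique non-trivial automorphism swaps one pair of equal-size classes, so $D(G)=\max\{|v^{\ast}_a|+1,\ldots\}$'' --- does not cover the configurations that make this lemma non-trivial. In (g.6) with all four classes of type (K) and equal size, and in (g.9) with all four classes of equal size, \emph{three or more} twin-classes can be permuted among themselves, so the symmetry is not a single transposition and neither of your two formulas is justified. The paper resolves this with an idea your plan lacks: an explicit coloring that assigns to the interchangeable classes pairwise distinct $|v^{\ast}_1|$-element subsets of a palette of $|v^{\ast}_1|+1$ colors (possible because $\binom{|v^{\ast}_1|+1}{|v^{\ast}_1|}\geq 3$, resp.\ $\geq 4$ when $|v^{\ast}_1|\geq 3$), which pins down $D(G)=|v^{\ast}_1|+1$; and in (g.9) with all classes of size exactly $2$ this counting fails and in fact $D(G)=|v^{\ast}_1|+2$, a ``$\max+2$'' value entirely outside your dichotomy (your formula would give $3$ where the true value is $4$). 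The numerical conclusion $D(G)\neq n-3$ happens to survive, but your proof as planned rests on distinguishing-number formulas that are false or unproven precisely in these symmetric subcases, so the key step is missing rather than merely tedious.
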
 
\begin{proof}
In (g.6), 
let 
$v^{\ast}_1$ 
and 
$v^{\ast}_2$
be the vertices of type (K),
$v^{\ast}_3$ 
be the vertex of type (1NK) and
$v^{\ast}_4$ 
be the vertex of type (NK). 
If 
$v^{\ast}_3$ 
and
$v^{\ast}_4$ 
are of type (K) and 
$|v^{\ast}_1|=|v^{\ast}_2|=|v^{\ast}_3|=|v^{\ast}_4|$, 
then we claim that 
$D(G)=|v^{\ast}_1|+1$.
Since 
$\binom{|v^{\ast}_1|+1}{|v^{\ast}_1|}\geq 3$,  
there are at least three subsets $A$, $B$ and $C$ of
$\{1, 2, \ldots, |v^{\ast}_1|+1 \}$ 
of size 
$|v^{\ast}_1|$ 
that can be used as member colors of each vertex of 
$G^{\ast}$. 
Now, color two adjacent vertices of $G^{\ast}$ using the colors from $A$, ensuring that every member of each vertex of $G^{\ast}$ is assigned a unique color.
In the same way, color members of each of the other two vertices of $G^{\ast}$ with the colors in $B$ and $C$, respectively. 
Hence, one can see that this coloring is a distinguishing coloring of $G$ and so $D(G)=|v^{\ast}_1|+1$. 
This implies that 
$D(G)\neq |V(G)|-3$. 
In all other cases, one can check that 
$D(G)\in \{|v^{\ast}_i|, |v^{\ast}_i|+1\}$  
for some $1 \leq i \leq 4$,
and so 
$D(G)\neq |V(G)|-3$. 
In (g.7) and (g.8), 
it is easy to see that 
$|v^{\ast}_i| \leq D(G) \leq |v^{\ast}_i| + 1$ 
for a $v^{\ast}_i \in G^{\ast}$. 
However, in all cases, the sum of the sizes of all three vertices of $G^{\ast}$ is greater than $4$. 
This means that 
$D(G)\neq |V(G)|-3$.
In (g.9), let 
$v^{\ast}_1$ 
be the vertex of any type and 
$v^{\ast}_2, v^{\ast}_3$ 
and 
$v^{\ast}_4$ 
be the other vertices. 
If 
$v^{\ast}_1$ 
is of type (N) and 
$|v^{\ast}_1|=|v^{\ast}_2|=|v^{\ast}_3|=|v^{\ast}_4|\geq 3$, 
then 
$\binom{|v^{\ast}_1|+1}{|v^{\ast}_1|}\geq 4$ 
and so
$D(G)= |v^{\ast}_1|+1$. 
This implies that 
$D(G)\neq |V(G)|-3$. 
If 
$|v^{\ast}_1|=|v^{\ast}_2|=|v^{\ast}_3|=|v^{\ast}_4|= 2$, 
then 
$D(G)= |v^{\ast}_1|+2$.  
Thus 
$D(G) =|V(G)|-3$ 
if and only if 
$|v^{\ast}_1|=5/3$, 
a contradiction. In all other cases, $D(G)\neq |V(G)|-3$ and the argument is similar to the previous one or follows from Observation \ref{obser}.
\end{proof} 
\begin{lemma}\label{dim n-4 lem3}
Let 
$G$ 
be a connected graph of order 
$n$. 
If 
${\rm dim}(G)=n-4$, 
${\rm diam}(G)=3$ 
and 
$|V(G^{\ast})|=4$, 
then 
$D(G) \neq n-3$.
\end{lemma}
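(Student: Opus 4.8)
The plan is to follow the same template used in Lemmas~\ref{G1}--\ref{G6}: since $G$ is connected with ${\rm dim}(G)=n-4$, ${\rm diam}(G)=3$ and $|V(G^{\ast})|=4$, the twin graph $G^{\ast}$ is one of the finitely many graphs on four vertices listed in \cite[Theorem~4.1]{dim n-4} (the part of that characterization dealing with ${\rm diam}(G)=3$, $|V(G^{\ast})|=4$). For each such $G^{\ast}$, I would fix an enumeration $v^{\ast}_1,\dots,v^{\ast}_4$ of its vertices together with their allowed types (1), (K), (N) as prescribed by that theorem, and then compute $D(G)$ in terms of the sizes $|v^{\ast}_1|,\dots,|v^{\ast}_4|$ using the automorphism structure of $G^{\ast}$.

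First I would handle the cases in which $G^{\ast}$ is asymmetric or almost asymmetric: there Corollary~\ref{lem-almost-asymmetric} gives $D(G)=\max_i |v^{\ast}_i|$, and the equation $\max_i |v^{\ast}_i| = \bigl(\sum_i |v^{\ast}_i|\bigr)-3$ forces the other three summands to total $3$, hence at least one of them equals $0$ or $1$ in a way that contradicts the type constraint it must satisfy (a type-(K) or type-(N) vertex has size $\ge 2$). For the remaining cases, $G^{\ast}$ has a nontrivial automorphism swapping two of its vertices, say $v^{\ast}_a\leftrightarrow v^{\ast}_b$; such an automorphism lifts to $G$ only when these two vertices are ``compatible'' in the blow-up, and then $D(G)=\max\bigl(\{|v^{\ast}_a|+1\}\cup\{|v^{\ast}_i|: i\ne a,b\}\bigr)$ (or a slightly larger increment if $G^{\ast}$ has a larger symmetry group, as in the $G_6$-type situation). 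Writing $D(G)=n-3$ then yields a small linear Diophantine system in the four sizes; in each case I expect it to collapse either to a contradiction (some size forced to be $0$, $1$, or a non-integer such as $3/2$ or $5/3$, against its type) — this is exactly the phenomenon seen in Lemmas~\ref{G2}--\ref{G6} and \ref{dim n-4 lem2}.

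The main obstacle is bookkeeping rather than conceptual: I must be careful to (a) correctly import the precise list of four-vertex twin graphs and their admissible vertex types from \cite[Theorem~4.1]{dim n-4} for the ${\rm diam}=3$ regime, and (b) in each case get the automorphism group of $G^{\ast}$ right — in particular to recognize when a nominal symmetry of $G^{\ast}$ fails to lift to $G$ because it would have to map a type-(K) vertex to a type-(N) vertex or map non-adjacent vertices onto a ``closed-neighbourhood'' twin class, which is impossible. Once the lift condition is pinned down, the formula $D(G)=\max_i |v^{\ast}_i|$ (possibly $+1$) is forced, and the arithmetic of $D(G)=n-3$ rules out every configuration, so no connected graph with the stated parameters can have $D(G)=n-3$, which is the claim. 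Since $G$ here is connected, there is no complementation step to add; the statement is exactly about this connected regime, and the proof terminates once all four-vertex twin graphs from the diameter-$3$ list have been checked.
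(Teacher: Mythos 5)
Your plan is essentially the paper's own proof: reduce to the finite list of four-vertex twin graphs in the characterization of \cite{dim n-4} (note that the diameter-$3$, $|V(G^{\ast})|=4$ case is their Theorem~3.1, not Theorem~4.1, which covers diameter $2$), then in the (almost) asymmetric cases apply Corollary~\ref{lem-almost-asymmetric} with the size/type arithmetic of Observation~\ref{obser}, and in the cases with a lifting symmetry use $D(G)=|v^{\ast}_i|+1$ and the same arithmetic to get a contradiction. Apart from that citation slip and the fact that you sketch rather than execute the finite case check, your route coincides with the paper's.
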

\begin{proof} 
The graph 
$G^{\ast}$ 
is one of the graphs described in \cite[Theorem 3.1]{dim n-4}.
Let 
$G^{\ast}$ 
be 
$(v^{\ast}_1, v^{\ast}_2, v^{\ast}_3, v^{\ast}_4)$. 
In (g.1), if 
$v^{\ast}_1$ (resp. $v^{\ast}_2$)
and 
$v^{\ast}_4$ (resp. $v^{\ast}_3$)
are of the same type,  
and 
$|v^{\ast}_1|=|v^{\ast}_2|=|v^{\ast}_3|=|v^{\ast}_4|$, 
then 
$D(G)=|v^{\ast}_1|+1$. 
Since 
$|v^{\ast}_2|+ |v^{\ast}_3|+ |v^{\ast}_4|\geq 6$,
$D(G)\neq n-3$. 
In all other cases, 
$D(G)=|v^{\ast}_i|$, 
for a $1\leq i \leq 4$. 
On the other hand, the sum of the sizes of all three vertices of $G^{\ast}$ is greater than $3$.
This means that 
$D(G) \neq n-3$. 
In (g.2), (g.3) and (g.4), since 
$D(G)=\max \{|v^{\ast}_1|, |v^{\ast}_2|, |v^{\ast}_3|, |v^{\ast}_4|\}$, 
the result is immediate by Observation \ref{obser}. 
\end{proof}

\begin{theorem} 
Let 
$G$ 
be a graph of order 
$n\geq 5$. 
Let 
{\rsfs F} 
be the set of all graphs
except graphs $G$  with the property that 
${\rm dim}(G_c)=n-4$, 
${\rm diam}(G_c) \in \{2, 3\}$ 
and 
$5\leq |V(G_{c}^{\ast})| \leq 9$. 
If 
$G \in${\rsfs F},
then 
$D(G)=n-3$ 
if and only if 
$G$ 
is one of the following: 
\begin{itemize}
\begin{small}
\begin{multicols}{2} 
\item[(1)]
$P_5$
\item[(3)] 
$K_{4, 4}$
\item[(5)] 
$K_{3}+\overline{K_t}$, $t\geq 3$
\item[(7)] 
$K_{2}+(K_t \cup K_{1})$, $t\geq 2$
\item[(9)] 
$K_{t, 3}$, $t\geq 4$
\item[(11)] 
$K_t + \overline{K_{3}}$, $t\geq 3$
\item[(13)]
$K_{t}+(K_{2}\cup K_{1})$, $t\geq 2$ 
\item[(15)] 
$K_{1, 2, t}$, $t\geq 3$
\item[(17)] 
$K_2 + K_{2, 2}$
\item[(19)] 
$K_{1, 3, 3}$
\item[(21)] 
$K_{2, 2, 2}$ 
\item[(23)] 
$\overline{K_2}+(K_1 \cup K_t )$, $t\geq 2$
\item[(25)] 
$\overline{K_2} + 2K_{2}$
\item[(27)] 
$\overline{K_t}+(K_1 \cup K_2)$, $t\geq 2$
\item[(29)] 
$K_2 + 2K_{2}$
\item[(31)] 
$K_1+(K_1 \cup K_{1,t})$, $t\geq 2$ 
\item[(33)]
$K_1 + P_4$ 
\item[(35)] 
$K_1 + (K_1 \cup 2K_{2})$
\item[(37)] 
$K_1 + (K_1 \cup K_{2, 2})$  
\item[(39)] 
$P_4 [K_1, K_t, K_1, K_1]$, $t\geq 2$ 
\item[(41)] 
$P_4 [K_1, \overline{K_t}, K_1, K_1]$, $t\geq 2$ 
\item[(2)]
$C_5^{'}$
\item[(4)] 
$2K_{4}$
\item[(6)] 
$\overline{K_{3}} \cup K_t$, $t\geq 3$
\item[(8)] 
$\overline{K_{2}}\cup K_{t,1}$, $t\geq 2$
\item[(10)] 
$K_{t}\cup K_3$, $t\geq 4$
\item[(12)] 
$\overline{K_t} \cup K_{3}$, $t\geq 3$
\item[(14)]
$\overline{K_t}\cup K_{2,1}$, $t\geq 2$ 
\item[(16)] 
$K_{1}\cup K_{2} \cup K_t  $, $t\geq 3$
\item[(18)] 
$\overline{K_2} \cup 2K_{2}$
\item[(20)] 
$2K_{3}\cup K_1$
\item[(22)] 
$3K_{2}$ 
\item[(24)] 
$K_{2} \cup K_{t,1}$, $t\geq 2$
\item[(26)] 
$K_2 \cup K_{2, 2}$
\item[(28)] 
$K_t \cup K_{2,1}$, $t\geq 2$
\item[(30)] 
$2K_1 \cup K_{2, 2}$
\item[(32)] 
$K_1 \cup (K_1 + (K_t \cup K_1))$, $t\geq 2$ 
\item[(34)]
$K_1 \cup P_4$ 
\item[(36)] 
$K_1 \cup (K_1 + K_{2, 2})$
\item[(38)] 
$K_1 \cup (K_1 + 2K_{2})$
\item[(40)] 
$P_4 [\overline{K_t}, K_1, K_1, K_1]$, $t\geq 2$ 
\item[(42)] 
$P_4 [K_t, K_1, K_1, K_1]$, $t\geq 2$ 
\end{multicols}
\end{small}
\end{itemize}
\end{theorem}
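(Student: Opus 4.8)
\medskip
\noindent\textit{Proof proposal.}
The plan is to mirror the arguments used for the two preceding characterizations (of $D(G)\in\{n-1,n-2\}$), now allowing ${\rm dim}(G_c)$ to be as small as $|V(G)|-4$. First I would reduce to the connected case: if $G$ is disconnected then $\overline{G}$ is connected, $G_c=\overline{G}$, $D(G)=D(\overline{G})$, and (since the twin relation, hence the twin graph, is preserved under complementation) $\overline{G}$ and $G$ have the same twin graph; so it suffices to determine all connected graphs whose distinguishing number is three less than their order, and then, by Lemma~\ref{discon}, adjoin the complements of those that happen to be disconnected. So fix a connected $G$ of order $n$ with $D(G)=n-3$. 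By Theorem~\ref{main}, ${\rm dim}(G)\ge n-4$; also ${\rm dim}(G)\le n-1$ always, and ${\rm dim}(G)=n-1$ forces $G=K_n$, hence $D(G)=n$ by \cite[Theorem 3]{0} and Remark~\ref{r1}. Thus ${\rm dim}(G)\in\{n-2,\,n-3,\,n-4\}$, and these three cases are disjoint because the metric dimension is an isomorphism invariant, so nothing is double counted.

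Next I would dispatch each value of ${\rm dim}(G)$ using the lemmas already proved. If ${\rm dim}(G)=n-2$, Lemma~\ref{lemn-2} with $\ell=3$ yields exactly the graphs (3), (5), (7), (9), (11), (13). If ${\rm dim}(G)=n-3$, then Yushmanov's bound ${\rm dim}(G)\le n-{\rm diam}(G)$ \cite{yush} forces ${\rm diam}(G)\in\{2,3\}$: for ${\rm diam}(G)=2$ the graph is one of the twin-graph families $G_1,\dots,G_{10}$ of \cite[Theorem 1]{Jannesari}, and Lemmas~\ref{G1}--\ref{Gal} extract precisely those with $D(G)=n-3$, namely (2), (15), (17), (19), (21), (23), (25), (27), (29), (31), (33); for ${\rm diam}(G)=3$ we have ${\rm dim}(G)=n-{\rm diam}(G)$, so \cite[Theorem 2.14]{Hernando} applies and Lemma~\ref{Hernando-Lem} with $d=3$ gives (39)--(42). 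If ${\rm dim}(G)=n-4$, then ${\rm diam}(G)\in\{2,3,4\}$: diameter $4$ again means ${\rm dim}(G)=n-{\rm diam}(G)$, so Lemma~\ref{Hernando-Lem} with $d=4$ produces only $P_5$, i.e.\ (1); diameter $3$ gives $4\le|V(G^{\ast})|\le 7$ by \cite{dim n-4}, where $|V(G^{\ast})|=4$ is settled by Lemma~\ref{dim n-4 lem3} (no such $G$) and $5\le|V(G^{\ast})|\le 7$ is, by definition, outside {\rsfs F}; diameter $2$ gives $4\le|V(G^{\ast})|\le 9$ by \cite{dim n-4}, where $|V(G^{\ast})|=4$ is handled via \cite[Theorem 4.1]{dim n-4} and Lemmas~\ref{dim n-4 lem1},~\ref{dim n-4 lem2}, giving (35) and (37), while $5\le|V(G^{\ast})|\le 9$ is again outside {\rsfs F}. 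This is exactly why the exceptional family in the statement consists of the graphs with ${\rm dim}(G_c)=n-4$, ${\rm diam}(G_c)\in\{2,3\}$ and $5\le|V(G_{c}^{\ast})|\le 9$: it is the union of the metric-dimension pieces left unresolved by the available characterizations.

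Finally I would assemble the list. The connected graphs with $D(G)=n-3$ lying in {\rsfs F} are the $24$ items collected above; by Lemma~\ref{discon} the disconnected ones are exactly the complements of those $24$ that are themselves disconnected, and computing these with the identities $\overline{H_1+H_2}=\overline{H_1}\cup\overline{H_2}$, $\overline{K_{s,t}}=K_s\cup K_t$, $\overline{K_1+H}=K_1\cup\overline{H}$, $\overline{K_{1,t}}=K_1\cup K_t$ and $\overline{P_4}=P_4$ produces (4), (6), (8), (10), (12), (14), (16), (18), (20), (22), (24), (26), (28), (30), (32), (34), (36), (38); since $P_5$ and $C_5'$ have connected complements they contribute nothing new, which is why the total is $24+18=42$.

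The real work is the twin-graph bookkeeping already packaged in Lemmas~\ref{G1}--\ref{dim n-4 lem3}: for each metric-dimension family one must decide, using $D(G)=\max\{\,|v^{\ast}|\;:\;v^{\ast}\in V(G^{\ast})\,\}$ when $G^{\ast}$ is almost asymmetric (Corollary~\ref{lem-almost-asymmetric}), Observation~\ref{obser}, and the binomial count $\binom{|v^{\ast}|+1}{|v^{\ast}|}$ for vertices of type (K) or (N), exactly which blow-up sizes yield $D(G)=n-3$, and then translate the surviving twin graphs back into the join/blow-up notation of the statement. The genuinely delicate point, on which the cleanliness of the statement rests, is verifying that {\rsfs F} is exactly the correct exception set: one must confirm that every metric-dimension-$(n-4)$ graph of diameter $2$ or $3$ whose twin graph has only $4$ vertices, together with the whole diameter-$4$ subcase, is already covered by the lemmas, so that the only gap left is $5\le|V(G_c^{\ast})|\le 9$; the remaining bookkeeping (matching complements and checking that no graph recurs across ${\rm dim}(G)\in\{n-2,n-3,n-4\}$) is routine once the value of ${\rm dim}(G)$ is used to keep the cases apart.
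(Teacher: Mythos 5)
Your proposal is correct and follows essentially the same route as the paper's own proof: reduce to the connected case via Lemma~\ref{discon}, use Theorem~\ref{main} together with \cite[Theorem 3]{0} and Remark~\ref{r1} to restrict ${\rm dim}(G)$ to $\{n-2,n-3,n-4\}$, and then dispatch each case by diameter and twin-graph order through Lemmas~\ref{lemn-2}, \ref{G1}--\ref{Gal}, \ref{Hernando-Lem} and \ref{dim n-4 lem1}--\ref{dim n-4 lem3}, with the exception set {\rsfs F} absorbing exactly the unresolved subcases $5\leq |V(G_c^{\ast})|\leq 9$. The only difference is organizational (you fold the ${\rm dim}(G)=n-{\rm diam}(G)$, ${\rm diam}(G)\in\{3,4\}$ subcases into their respective dimension cases, while the paper treats them together at the end), which does not affect correctness.
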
 
\begin{proof}
Assume first that 
$G$ 
is a connected graph with $D(G)=n-3$. Theorem \ref{main}, \cite[Theorem 3]{0} and Remark \ref{r1} conclude that 
${\rm dim}(G) \in \{n-2, n-3, n-4\}$. 
If ${\rm dim}(G) = n-2$, 
then let $\ell = 3$ in Lemma \ref{lemn-2}. Hence, $G$ is one of the graphs described in (3), (5), (7), (9), (11) or (13).  
Let ${\rm dim}(G) = n-3$. 
Thus 
${\rm diam}(G)\in \{2, 3\}$. 
If
${\rm diam}(G)=2$, 
then \cite[Theorem 1]{Jannesari} and Lemmas \ref{G1}, \ref{G2}, \ref{G3}, \ref{G5}, \ref{G6}, \ref{Gal} 
conclude the graphs in (15), (17), (19), (21), (23),  (25), (27), (29), (31), (33) and (2). 
Suppose ${\rm dim}(G) = n-4$. 
Then 
${\rm diam}(G)\in \{2, 3, 4\}$. 
If
${\rm diam}(G)=2$,  
then by \cite[Theorem 4.3]{dim n-4},
$4 \leq |V(G^{\ast})| \leq 9$.
If 
$|V(G^{\ast})|=4$, 
then by \cite[Theorem 4.1]{dim n-4} and Lemmas \ref{dim n-4 lem1}, \ref{dim n-4 lem2} 
we have the graphs in (35) and (37). 
If
${\rm diam}(G)=3$,  
then by \cite[Theorem 3.4]{dim n-4},
$4 \leq |V(G^{\ast})| \leq 7$.
If 
$|V(G^{\ast})|=4$, 
then by \cite[Theorem 3.1]{dim n-4} and Lemma \ref{dim n-4 lem3}, 
there is no graph $G$ with $D(G)=n-3$.
If
${\rm diam}(G)\in \{3, 4\}$ 
and 
${\rm dim}(G)=n-{\rm diam}(G)$, 
then the graphs in (1), (39), (40), (41) and (42) follow directly from \cite[Theorem 2.14]{Hernando} and Lemma \ref{Hernando-Lem}. 
If
$G$ 
is a disconnected graph, then by Lemma \ref{discon}, $G$ has the form described in (4), (6), $\ldots ,$ (38) as the complements of the graphs in (3), (5), $\ldots ,$ (37), respectively. 
(Note that the complement of graphs in (1), (39) and (41) are the ones in (2), (40) and (42), respectively, and all of them are connected).
\end{proof} 

\section{Conclusions and Future Research} 
This paper explores the relationship between the metric dimension and the distinguishing number.  
We show that for connected graphs, each resolving set results in a distinguishing coloring.
This connection facilitates faster resolution of certain problems in both concepts. For instance, certain graphs with large distinguishing number are identified here with the aid of this connection. Determining each resolving set, yields a distinguishing coloring that is not necessarily with the minimum number of colors. 
Therefore, graphs $G$ with 
$D(G)={\rm dim}(G)+1$
have significant value due to the fact that solving the metric dimension problem in these graphs and identifying the minimum resolving set will result in a distinguishing coloring that uses the fewest possible colors and the distinguishing vertices problem is completely solved. From our findings, obtaining graphs $G$ of order $n$ with
$D(G)={\rm dim}(G)+1 \in \{n-1, n-2\}$
is straightforward. Finding graphs with 
$D(G)={\rm dim}(G)+1=k$ 
can be an interesting task when classifications for graphs with
$D(G)=k$ and ${\rm dim}(G)=k-1$ are not explicit.  
It is also important to study connected graphs $G$ of order $n$ with the property that 
${\rm dim}(G)=n-4$, 
${\rm diam}(G) \in \{2, 3\}$ 
and 
$5\leq |V(G^{\ast})| \leq 9$ 
because it deals with two characterization problems of graphs with 
${\rm dim}(G)=n-4$ 
and 
$D(G)=n-3$.

\end{document}